\theoremstyle{plain}
\newtheorem{thm}{\protect\theoremname}[section]
  \theoremstyle{plain}
  \newtheorem{cor}[thm]{\protect\corollaryname}
  \theoremstyle{plain}
  \newtheorem{lem}[thm]{\protect\lemmaname}
  \theoremstyle{remark}
  \newtheorem{rem}[thm]{\protect\remarkname}
\newenvironment{keywords}{ \noindent\footnotesize\textbf{Keywords and phrases:}}{}
\newenvironment{class}{\noindent\footnotesize\textbf{Mathematics subject classification 2010:}}{}
\newcommand*{\dive}{\operatorname{div}}
\newcommand*{\grad}{\operatorname{grad}}
\newcommand*{\ii}{\mathrm{i}}
\renewcommand*{\i}{\mathrm{i}}
\DeclareMathAccent{\Circ}{\mathalpha}{operators}{"17}
\renewcommand{\Re}{\operatorname{\mathfrak{Re}}}
\renewcommand{\hat}{\widehat}
\renewcommand*{\epsilon}{\varepsilon}
\renewcommand*{\theta}{\vartheta}
\renewcommand*{\nu}{\varrho}
\renewcommand*{\rho}{\varrho}
  \providecommand{\corollaryname}{Corollary}
  \providecommand{\lemmaname}{Lemma}
  \providecommand{\remarkname}{Remark}
\providecommand{\theoremname}{Theorem}
\begin{document}
\selectlanguage{english}%
%\institut{Institut f\"ur Analysis}

\selectlanguage{american}%
%\preprintnumber{MATH-AN-XX-2016}

%\preprinttitle{On Maximal Regularity for a Class of Evolutionary Equations.}

%\author{Rainer Picard, Sascha Trostorff \& Marcus Waurick } 

%\makepreprinttitlepage

\setcounter{section}{-1}

\title{On Maximal Regularity for a Class of Evolutionary Equations.}

\author{Rainer Picard, Sascha Trostorff\\
 {\footnotesize{}Institut für Analysis, Fachrichtung Mathematik, Technische
Universität Dresden}\\
 Marcus Waurick\\
 {\footnotesize{}Department of Mathematical Sciences, University of
Bath}\\
 {\tiny{}rainer.picard@tu-dresden.de, sascha.trostorff@tu-dresden.de,
marcus.waurick@tu-dresden.de}}
\maketitle
\begin{abstract}
The issue of so-called maximal regularity is discussed within a Hilbert
space framework for a class of evolutionary equations. Viewing evolutionary
equations as a sums of two unbounded operators, showing maximal regularity
amounts to establishing that the operator sum considered with its
natural domain is already closed. For this we use structural constraints
of the coefficients rather than semi-group strategies or sesqui-linear
form methods, which would be difficult to come by for our general
problem class. Our approach, although limited to the Hilbert space
case, complements known strategies for approaching maximal regularity
and extends them in a different direction. The abstract findings are
illustrated by re-considering some known maximal regularity results
within the framework presented. 
\end{abstract}
\begin{keywords} maximal regularity, evolutionary equations, material
laws, coupled systems\end{keywords}

\begin{class} 35B65, 35D35, 58D25 \end{class}

\tableofcontents{}

\section{Introduction}

The issue of maximal regularity has received much attention as an
important property of certain partial differential equations and more
abstractly as a feature of a class of evolution equations. In a Hilbert
space setting, the typical situation thus refers to an abstract operator
equation in $L^{2,\mathrm{loc}}([0,\infty[,H)$ of the form 
\begin{equation}
u'+\mathcal{A}u=f,\label{eq:mr1}
\end{equation}
for some given $f\in L^{2,\mathrm{loc}}([0,\infty[,H)$, $H$ a Hilbert
space. Moreover, $u\colon]0,\infty[\to H$ is a measurable function
with $u'$ being its weak derivative and $\mathcal{A}$ is the (abstract)
linear operator on $L^{2,\mathrm{loc}}([0,\infty[,H)$ induced by
an operator $A$, assumed to be the infinitesimal generator of a one-parameter
$C_{0}$-semi-group, i.e. $\left(\mathcal{A}u\right)(t)\coloneqq A(u(t))$.
If we solve equation \eqref{eq:mr1} for $u$ subject to homogeneous
initial conditions, we can expect $u$ to be at best only continuous.
Thus, $u$ is a so-called \emph{mild solution of \eqref{eq:mr1}},
that is, $u$ solves the equation in question in an integrated form.
To obtain better regularity behaviour one is interested in the case,
where for any given $f$, the corresponding solution $u$ is such
that $u'$ and $\mathcal{A}u$ both belong to $L^{2,\mathrm{loc}}([0,\infty[,H)$
and, hence, $u$ \emph{literally} solves \eqref{eq:mr1} in $L^{2,\mathrm{loc}}([0,\infty[,H)$.
This property is commonly attributed to the semi-group generator $A$
and one says in this case that $A$ \emph{admits maximal $L^{2}$-regularity}.
A standard situation is that $\mathcal{A}$ is a non-negative selfadjoint
operator and so, if $\mathcal{A}=C^{*}C$ for some closed and densely
defined linear operator $C$, the corresponding evolution equation
admits maximal regularity as can be easily seen in this simple case
with the help of the spectral theorem for $A$. We shall refer to
the seminal paper \cite{daPrato1975} as a standard reference for
maximal regularity. We also refer the reader to \cite{Chill2005,Chill2008,Batty2015}
for the $L^{p}$-maximal regularity of second-order Cauchy problems,
to \cite{Arendt2007,Arendt2014} for maximal regularity for non-autonomous
problems, to \cite{Sforza1995,Zacher2005} for integro-differential
equations and to \cite{Bu2011,Ponce2013} for fractional differential
equations.

In this article, we revisit the standard Hilbert space case $\mathcal{A}=C^{*}C$
under a system perspective: By setting $v\coloneqq-Cu$ we deduce
from \eqref{eq:mr1}, writing $\partial_{0}$ for the time derivative,
the operator equation 
\[
\left(\begin{array}{cc}
\partial_{0} & 0\\
0 & 0
\end{array}\right)\left(\begin{array}{c}
u\\
v
\end{array}\right)+\left(\begin{array}{cc}
0 & 0\\
0 & 1
\end{array}\right)\left(\begin{array}{c}
u\\
v
\end{array}\right)+\left(\begin{array}{cc}
0 & -C^{*}\\
C & 0
\end{array}\right)\left(\begin{array}{c}
u\\
v
\end{array}\right)=\left(\begin{array}{c}
f\\
0
\end{array}\right).
\]
Now, we ask for the maximal regularity, when the coefficients $\left(\begin{array}{cc}
\partial_{0} & 0\\
0 & 0
\end{array}\right)$ and $\left(\begin{array}{cc}
0 & 0\\
0 & 1
\end{array}\right)$ are replaced by more general operators $\left(\begin{array}{cc}
\partial_{0}\mathcal{M} & 0\\
0 & 0
\end{array}\right)$ and $\mathcal{N}$ acting in space time. Under suitable conditions
on $\mathcal{M}$ and $\mathcal{N}$, we will show in our main Theorem
\ref{thm:maxreck}, that for a given $L^{2}$-right-hand side $f$,
the solution $(u,v)$ has the following properties. We have that $u$
is weakly $L^{2}$-differentiable with respect to time and that $(u,v)\in D\left(\left(\begin{array}{cc}
0 & -C^{*}\\
C & 0
\end{array}\right)\right)$. Moreover, the equation 
\[
\left(\begin{array}{cc}
\partial_{0}\mathcal{M} & 0\\
0 & 0
\end{array}\right)\left(\begin{array}{c}
u\\
v
\end{array}\right)+\mathcal{N}\left(\begin{array}{c}
u\\
v
\end{array}\right)+\left(\begin{array}{cc}
0 & -C^{*}\\
C & 0
\end{array}\right)\left(\begin{array}{c}
u\\
v
\end{array}\right)=\left(\begin{array}{c}
f\\
0
\end{array}\right)
\]
is satisfied \emph{literally.} This remains true if we alter the right-hand
side $\left(\begin{array}{c}
f\\
0
\end{array}\right)$ to $\left(\begin{array}{c}
f\\
g
\end{array}\right)$ for any weakly differentiable $g$. Our first order approach complements
known results on maximal regularity by allowing for quite general
coefficients $\mathcal{M},\mathcal{N}$. With this generalization,
we enter the realm of so-called evolutionary equations, which we briefly
introduce in the next section. This class comprises the standard initial
boundary value problems of mathematical physics in a unified setting,
we refer to \cite{Picard2014_survey} for a survey. After having introduced
the mathematical framework, we will provide our main result in Section
\ref{sec:The-main-result}. We conclude this article with several
illustrative examples in the last section. The more involved examples
are (abstract) second order problems (in both time and space) (adapted
from \cite{Batty2015,Sforza1995}) as well as problems with a fractional
time derivative, which is an adaptation from \cite{Ponce2013}.

\section{\label{sec:A-brief-description}A brief description of the framework
of evolutionary equations}

We recall the notion of evolutionary equations, as introduced in \cite[Solution Theory]{Picard},
a term we use in distinction to classical evolution equations, which
are a special case. For this, let throughout $\nu$ be a positive,
real parameter and $H$ a Hilbert space. Define 
\[
L_{\nu}^{2}(\mathbb{R},H)\coloneqq\{f\in L_{\textnormal{loc}}^{2}(\mathbb{R},H)|(t\mapsto e^{-\nu t}f(t))\in L^{2}(\mathbb{R},H)\},
\]
which endowed with the natural scalar product 
\[
\langle f,g\rangle\coloneqq\intop_{\mathbb{R}}\langle f(t),g(t)\rangle_{H}e^{-2\rho t}\mbox{ d}t\quad(f,g\in L_{\rho}^{2}(\mathbb{R},H))
\]
is again a Hilbert space. The operator 
\[
\partial_{0}\colon D(\partial_{0})\subseteq L_{\nu}^{2}(\mathbb{R},H)\to L_{\nu}^{2}(\mathbb{R},H),f\mapsto f'
\]
with $f'$ being the distributional derivative and $D(\partial_{0})=\{f\in L_{\nu}^{2}(\mathbb{R},H)|f'\in L_{\nu}^{2}(\mathbb{R},H)\}$
defines a normal operator with $\Re\partial_{0}=\nu$ (see e.g \cite[Section 2.2]{Picard2014_survey}).
Indeed, $\partial_{0}$ is unitarily equivalent to the operator $\ii m+\nu$
of multiplication by the function $\xi\mapsto\ii\xi+\nu$ considered
as an operator in $L^{2}(\mathbb{R},H)$. This spectral representation
result is realized by the so-called Fourier--Laplace transformation
$\mathcal{L}_{\nu}\colon L_{\nu}^{2}(\mathbb{R},H)\to L^{2}(\mathbb{R},H)$,
that is, the unitary extension of the integral operator given by 
\[
\mathcal{L}_{\nu}\phi(\xi)\coloneqq\frac{1}{\sqrt{2\pi}}\int_{\mathbb{R}}e^{-\ii t\xi-\nu t}\phi(t)\mbox{ d}t\quad(\xi\in\mathbb{R})
\]
for bounded, measurable and compactly supported functions $\phi\colon\mathbb{R}\to H.$
In particular, since $\rho>0,$ we read off that $\partial_{0}$ is
boundedly invertible on $L_{\rho}^{2}(\mathbb{R},H)$ with $\|\partial_{0}^{-1}\|\leq\frac{1}{\rho}.$
It is clear that the spectrum of $\ii m+\nu$ is given by the set
$\ii\left[\mathbb{R}\right]+\nu$. Hence, $\sigma(\partial_{0}^{-1})=\sigma(\left(\ii m+\nu\right)^{-1})=\partial B_{\mathbb{C}}(r,r)$
with $r=1/(2\nu)$. Thus, the said spectral representation gives rise
to a functional calculus for $\partial_{0}^{-1}$: Let $r'>r$. For
an analytic bounded function $M\colon B_{\mathbb{C}}(r',r')\to L(H)$
we define 
\[
M(\partial_{0}^{-1})\coloneqq\mathcal{L}_{\nu}^{*}M\left(\left(\ii m+\nu\right)^{-1}\right)\mathcal{L}_{\nu},
\]
where $\left(M\left(\left(\ii m+\nu\right)^{-1}\right)\phi\right)(t)\coloneqq M\left(\left(\ii t+\nu\right)^{-1}\right)\phi(t)$
for all $t\in\mathbb{R}$ and $\phi\in L^{2}(\mathbb{R},H)$. Again,
we refer to \cite{Picard2014_survey} for several examples of analytic
operator-valued functions of $\partial_{0}^{-1}$ and their occurrence
in the context of partial differential equations.

The solution theory, that is, unique existence of solutions and continuous
dependence on the data, for many linear equations of mathematical
physics is covered by the following theorem. For this, note that we
do not distinguish between operators defined on $H$ and there respective
lifts to the space $L_{\nu}^{2}(\mathbb{R},H)$. Also the explicit
dependence on $\rho$ is frequently suppressed. 
\begin{thm}[{\cite[Solution Theory]{Picard},\cite[Theorem 6.2.5]{Picard_McGhee}}]
\label{thm:st} Let $A\colon D(A)\subseteq H\to H$ be skew-selfadjoint,
$M$ as above. Assume there is $c>0$ such that 
\[
\Re\langle z^{-1}M(z)\phi,\phi\rangle\geq c\langle\phi,\phi\rangle\quad(\phi\in H,z\in B_{\mathbb{C}}(r',r')).
\]
Then the operator $B\coloneqq\partial_{0}M(\partial_{0}^{-1})+A$
defined on its natural domain is closable and the closure is continuously
invertible, that is, $S\coloneqq\overline{B}^{-1}\in L(L_{\nu}^{2}(\mathbb{R},H))$.
Moreover, $S$ commutes with $\partial_{0}^{-1}$ and for all $u\in D(\overline{B}),$
and $\epsilon>0$, we have $\left(1+\epsilon\partial_{0}\right)^{-1}u\in D(B)=D(\partial_{0}M(\partial_{0}^{-1}))\cap D(A)$. 
\end{thm}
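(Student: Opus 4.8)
The plan is to reduce everything to one coercivity estimate and let the skew-selfadjointness of $A$ annihilate its contribution to the real part. First I would push the hypothesis through the Fourier--Laplace transform. Under $\mathcal{L}_{\nu}$ the operator $\partial_{0}M(\partial_{0}^{-1})$ is unitarily equivalent to multiplication by $m(t)\coloneqq(\i t+\nu)M((\i t+\nu)^{-1})$, and as $t$ runs through $\mathbb{R}$ the argument $z=(\i t+\nu)^{-1}$ traces the spectral circle $\partial B_{\mathbb{C}}(r,r)$, which lies in $B_{\mathbb{C}}(r',r')$ since $r'>r$. Hence the pointwise bound $\Re\langle z^{-1}M(z)\phi,\phi\rangle\geq c\langle\phi,\phi\rangle$ becomes the operator estimate $\Re\langle\partial_{0}M(\partial_{0}^{-1})u,u\rangle\geq c\|u\|^{2}$. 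Since the lift of $A$ is again skew-selfadjoint, $\Re\langle Au,u\rangle=0$, and therefore on the natural domain $D(B)=D(\partial_{0}M(\partial_{0}^{-1}))\cap D(A)$ --- which is dense because it contains $(1+\epsilon\partial_{0})^{-1}[D(A)]$ for every $\epsilon>0$ and $(1+\epsilon\partial_{0})^{-1}\to\identity$ strongly --- we obtain $\Re\langle Bu,u\rangle\geq c\|u\|^{2}$, whence $\|Bu\|\geq c\|u\|$ by Cauchy--Schwarz.

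The same computation applies to the formal adjoint $B^{\diamond}\coloneqq(\partial_{0}M(\partial_{0}^{-1}))^{*}-A$: the pointwise adjoint $m(t)^{*}$ has the same symmetric part as $m(t)$, so $\Re\langle(\partial_{0}M(\partial_{0}^{-1}))^{*}v,v\rangle\geq c\|v\|^{2}$, and $-A$ is skew-selfadjoint, giving $\|B^{\diamond}v\|\geq c\|v\|$ on the dense domain $D(\partial_{0})\cap D(A)$. A short integration by parts yields $B^{\diamond}\subseteq B^{*}$; in particular $B^{*}$ is densely defined, so $B$ is closable with $(\overline{B})^{*}=B^{*}$, and the estimate passes to the closure: $\|\overline{B}u\|\geq c\|u\|$. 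Thus $\overline{B}$ is injective with closed range and $\|\overline{B}^{-1}\|\leq c^{-1}$ on that range. Invertibility then follows from the standard criterion that a closed, densely defined operator which is bounded below together with its adjoint is onto: here $\operatorname{ran}(\overline{B})^{\perp}=\ker(\overline{B})^{*}=\ker B^{*}$, so I must promote the lower bound from $B^{\diamond}$ to the genuine adjoint $B^{*}$ on its a priori larger domain.

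Both this promotion and the final regularity claim rest on the mollifier $(1+\epsilon\partial_{0})^{-1}$, which is bounded, maps into $D(\partial_{0})\subseteq D(\partial_{0}M(\partial_{0}^{-1}))$ because $\partial_{0}(1+\epsilon\partial_{0})^{-1}=\epsilon^{-1}(1-(1+\epsilon\partial_{0})^{-1})$ is bounded, and commutes with $\partial_{0}M(\partial_{0}^{-1})$ and with $A$. For surjectivity I would regularize $v\in D(B^{*})$ by $(1+\epsilon\partial_{0}^{*})^{-1}$, which commutes with $B^{*}$ and smooths in time; the regularized element lies in $D(B^{\diamond})$, so $\|B^{\diamond}\cdot\|\geq c\|\cdot\|$ applies, and letting $\epsilon\to0$ gives $\|B^{*}v\|\geq c\|v\|$. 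Hence $\ker B^{*}=\{0\}$, the range of $\overline{B}$ is dense and, being closed, equals the whole space, so $S=\overline{B}^{-1}\in L(L_{\nu}^{2}(\mathbb{R},H))$ with $\|S\|\leq c^{-1}$. Because $\partial_{0}^{-1}$ commutes with $\partial_{0}M(\partial_{0}^{-1})$ (both functions of $\partial_{0}$) and with $A$ (they act on different variables), it leaves $D(B)$ invariant and commutes with $B$, hence with $\overline{B}$, hence $S\partial_{0}^{-1}=\partial_{0}^{-1}S$. For the regularity statement, given $u\in D(\overline{B})$ pick $u_{n}\in D(B)$ with $u_{n}\to u$ and $Bu_{n}\to\overline{B}u$; then $w_{n}\coloneqq(1+\epsilon\partial_{0})^{-1}u_{n}\in D(B)$ converges in the graph norm of $\partial_{0}$ to $w\coloneqq(1+\epsilon\partial_{0})^{-1}u$, so $\partial_{0}M(\partial_{0}^{-1})w_{n}$ converges and therefore $Aw_{n}=Bw_{n}-\partial_{0}M(\partial_{0}^{-1})w_{n}$ converges too; closedness of $A$ forces $w\in D(A)$, so $w\in D(\partial_{0})\cap D(A)=D(B)$.

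The step I expect to be the crux is precisely the domain bookkeeping for the unbounded operator sum: $B$ is only closable, not closed, so the whole argument is about controlling the gap between $B$, its formal adjoint $B^{\diamond}$, and the genuine adjoint $B^{*}$. Transferring the scalar positivity into the operator real part is routine via the Fourier--Laplace spectral representation, and the lower bounds for $B$ and $B^{\diamond}$ are immediate; the genuinely non-routine input is the mollification argument that promotes the $B^{\diamond}$-estimate to $B^{*}$ on its full domain, since this is what turns the closed range into all of $L_{\nu}^{2}(\mathbb{R},H)$ and simultaneously delivers the asserted time regularity.
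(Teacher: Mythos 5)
The paper does not actually prove Theorem \ref{thm:st} itself---it imports it from \cite{Picard} and \cite[Theorem 6.2.5]{Picard_McGhee}---and your proposal reconstructs essentially the argument given in those sources: coercivity of $\partial_{0}M(\partial_{0}^{-1})$ via the Fourier--Laplace multiplication representation (with the punctured spectral circle $\partial B_{\mathbb{C}}(r,r)\setminus\{0\}$ correctly placed inside $B_{\mathbb{C}}(r',r')$), skew-selfadjointness of $A$ removing its contribution to the real part, the matching estimate for the formal adjoint, and the promotion of that estimate to the genuine adjoint $B^{*}$ by mollification with $(1+\epsilon\partial_{0}^{*})^{-1}$, which is exactly the mechanism behind the paper's own uses of $(1+\epsilon\partial_{0})^{-1}$ in Corollary \ref{cor:mrst} and Lemma \ref{lem:dmd}. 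The one step you should spell out is why $v_{\epsilon}\coloneqq(1+\epsilon\partial_{0}^{*})^{-1}v$ lies in $D(A)$: time-smoothing alone only yields $v_{\epsilon}\in D(\partial_{0}^{*})=D(\partial_{0})\subseteq D\left(\left(\partial_{0}M(\partial_{0}^{-1})\right)^{*}\right)$, and membership in $D(A)$ follows instead by combining the commutation $B^{*}v_{\epsilon}=(1+\epsilon\partial_{0}^{*})^{-1}B^{*}v$ with the observation that $D(B)$ is a core for $A$ (mollify $u\in D(A)$ by $(1+\delta\partial_{0})^{-1}$, which commutes with $A$), so that $u\mapsto\langle Au,v_{\epsilon}\rangle$ extends boundedly to all of $D(A)$ and $v_{\epsilon}\in D(A^{*})=D(A)$ as required for the estimate $\|B^{\diamond}v_{\epsilon}\|\geq c\|v_{\epsilon}\|$ to apply.
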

For the last statement of the theorem one may also consult \cite[Lemma 5.2]{Waurick2015_nonauto}.
We have purposely left out the reference to causality, which also
holds and is essential for well-posedness of evolutionary equations
in general, but plays a lesser role in this paper. We note the following
corollary to Theorem \ref{thm:st}. 
\begin{cor}
\label{cor:mrst}With the assumptions and notations in the last theorem,
the following is true. Let $u\in D(\overline{B}).$ If $u\in D(\partial_{0}M(\partial_{0}^{-1}))$,
then $u\in D(A)$ and $\overline{B}u=Bu=\partial_{0}M(\partial_{0}^{-1})u+Au.$\end{cor}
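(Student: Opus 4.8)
The plan is to prove the statement by a regularisation argument, approximating a general $u\in D(\overline{B})$ from within the natural domain $D(B)=D(\partial_{0}M(\partial_{0}^{-1}))\cap D(A)$ and passing to the limit. For $\epsilon>0$ set $u_{\epsilon}\coloneqq(1+\epsilon\partial_{0})^{-1}u$. By the last assertion of Theorem \ref{thm:st} we have $u_{\epsilon}\in D(B)$, so that, since $\overline{B}$ extends $B$, the identity $\overline{B}u_{\epsilon}=Bu_{\epsilon}=\partial_{0}M(\partial_{0}^{-1})u_{\epsilon}+Au_{\epsilon}$ holds. The goal is to show that the terms on the right converge as $\epsilon\to0$, with $u_{\epsilon}\to u$, and then to read off from the closedness of $A$ that $u\in D(A)$ together with the claimed formula.

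The crux lies in two commutation-with-approximation properties of the smoothing operator $(1+\epsilon\partial_{0})^{-1}$. First, writing it through the Fourier--Laplace transform as multiplication by $\xi\mapsto(1+\epsilon(\i\xi+\nu))^{-1}$, one sees that this family is uniformly bounded by $1$ and converges pointwise to $1$; dominated convergence then yields $(1+\epsilon\partial_{0})^{-1}\to 1$ strongly on $L_{\nu}^{2}(\mathbb{R},H)$ as $\epsilon\to0$. Secondly, since $(1+\epsilon\partial_{0})^{-1}=\partial_{0}^{-1}(\partial_{0}^{-1}+\epsilon)^{-1}$ is a bounded function of $\partial_{0}^{-1}$, it commutes with $S=\overline{B}^{-1}$ (which commutes with $\partial_{0}^{-1}$ by Theorem \ref{thm:st}); hence $(1+\epsilon\partial_{0})^{-1}$ maps $D(\overline{B})$ into itself with $\overline{B}(1+\epsilon\partial_{0})^{-1}u=(1+\epsilon\partial_{0})^{-1}\overline{B}u$. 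In the same way, as a bounded function of $\partial_{0}$, the operator $(1+\epsilon\partial_{0})^{-1}$ commutes with the normal operator $\partial_{0}M(\partial_{0}^{-1})$ and leaves its domain invariant, so that for $u\in D(\partial_{0}M(\partial_{0}^{-1}))$ we obtain $\partial_{0}M(\partial_{0}^{-1})u_{\epsilon}=(1+\epsilon\partial_{0})^{-1}\partial_{0}M(\partial_{0}^{-1})u$.

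With these facts the limit is immediate. Applying the strong convergence to the fixed vectors $\overline{B}u$ and $\partial_{0}M(\partial_{0}^{-1})u$ gives $\overline{B}u_{\epsilon}=(1+\epsilon\partial_{0})^{-1}\overline{B}u\to\overline{B}u$ and $\partial_{0}M(\partial_{0}^{-1})u_{\epsilon}\to\partial_{0}M(\partial_{0}^{-1})u$ as $\epsilon\to0$. Subtracting these from the identity $\overline{B}u_{\epsilon}=\partial_{0}M(\partial_{0}^{-1})u_{\epsilon}+Au_{\epsilon}$, we find that $Au_{\epsilon}$ converges to $\overline{B}u-\partial_{0}M(\partial_{0}^{-1})u$, while $u_{\epsilon}\to u$. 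Since $A$ is skew-selfadjoint, hence closed, this forces $u\in D(A)$ with $Au=\overline{B}u-\partial_{0}M(\partial_{0}^{-1})u$. Consequently $u\in D(\partial_{0}M(\partial_{0}^{-1}))\cap D(A)=D(B)$, and therefore $\overline{B}u=Bu=\partial_{0}M(\partial_{0}^{-1})u+Au$, as claimed.

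I expect the only genuinely delicate point to be the rigorous justification of the two commutation-and-domain-invariance statements for the unbounded operators $\overline{B}$ and $\partial_{0}M(\partial_{0}^{-1})$; once these are secured through the functional calculus (equivalently, the multiplication-operator picture in Fourier--Laplace space) together with the strong convergence of the approximate identity $(1+\epsilon\partial_{0})^{-1}$, the remaining limit passage and the appeal to the closedness of $A$ are routine.
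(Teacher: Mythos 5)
Your proof is correct and follows essentially the same regularisation argument as the paper: approximate $u$ by $u_{\epsilon}=(1+\epsilon\partial_{0})^{-1}u$, use the commutation of this smoothing operator with $S=\overline{B}^{-1}$ (and with $\partial_{0}M(\partial_{0}^{-1})$) to identify $Bu_{\epsilon}=(1+\epsilon\partial_{0})^{-1}\overline{B}u$ and $\partial_{0}M(\partial_{0}^{-1})u_{\epsilon}=(1+\epsilon\partial_{0})^{-1}\partial_{0}M(\partial_{0}^{-1})u$, and conclude via the closedness of $A$. The only difference is cosmetic: you spell out the functional-calculus justifications (the multiplier picture, the identity $(1+\epsilon\partial_{0})^{-1}=\partial_{0}^{-1}(\partial_{0}^{-1}+\epsilon)^{-1}$, domain invariance) that the paper leaves implicit.
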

\begin{proof}
Let $\epsilon>0$ and define $u_{\epsilon}\coloneqq(1+\epsilon\partial_{0})^{-1}u$.
By Theorem \ref{thm:st}, we get $u_{\epsilon}\in D(B)$ and, since
$S$ commutes with $\partial_{0}^{-1}$, $(1+\epsilon\partial_{0})^{-1}\overline{B}u=Bu_{\epsilon}$.
Thus, since $(1+\epsilon\partial_{0})^{-1}\to1$ as $\epsilon\to0$
in the strong operator topology, we infer $u_{\epsilon}\to u$ and
$Bu_{\epsilon}\to\overline{B}u$ in $L_{\nu}^{2}(\mathbb{R},H)$ as
$\epsilon\to0$. Furthermore, from 
\[
Bu_{\epsilon}=\left(\partial_{0}M(\partial_{0}^{-1})+A\right)u_{\epsilon}=\partial_{0}M(\partial_{0}^{-1})u_{\epsilon}+Au_{\epsilon}=(1+\epsilon\partial_{0})^{-1}\partial_{0}M(\partial_{0}^{-1})u+Au_{\epsilon},
\]
we read off by the closedness of $A$, that $u\in D(A)$ and $\overline{B}u=\partial_{0}M(\partial_{0}^{-1})u+Au$. 
\end{proof}

\section{The main result\label{sec:The-main-result}}

In this section, we show a maximal regularity result for a prototype
equation (see also Corollary \ref{cor:MR2nd} below). Let throughout
this section $C\colon D(C)\subseteq H_{0}\to H_{1}$ be a densely
defined, closed linear operator between Hilbert spaces $H_{0}$ and
$H_{1}$, $r>0$. Moreover, let $M\colon B_{\mathbb{C}}(r,r)\to L(H_{0}),$
$N_{ij}\colon B_{\mathbb{C}}(r,r)\to L(H_{j},H_{i})$ analytic and
bounded, $i,j\in\{0,1\}$. The prototype operator to study in the
following is 
\begin{equation}
B\coloneqq\left(\partial_{0}\left(\begin{array}{cc}
M(\partial_{0}^{-1}) & 0\\
0 & 0
\end{array}\right)+\left(\begin{array}{cc}
N_{00}(\partial_{0}^{-1}) & N_{01}(\partial_{0}^{-1})\\
N_{10}(\partial_{0}^{-1}) & N_{11}(\partial_{0}^{-1})
\end{array}\right)\right)+\left(\begin{array}{cc}
0 & -C^{*}\\
C & 0
\end{array}\right)\label{eq:prob}
\end{equation}
with domain $D(\partial_{0}M(\partial_{0}^{-1}))\cap D\left(\left(\begin{array}{cc}
0 & -C^{*}\\
C & 0
\end{array}\right)\right)$ in the space $L_{\nu}^{2}(\mathbb{R},H_{0}\oplus H_{1})$, where
$\nu>1/2r$.

We will use the following assumptions 
\begin{enumerate}
\item \label{enu:ass_m_n}There is $c_{0}>0$ such that for all $z\in B_{\mathbb{C}}(r,r)$
and $\left(\phi,\psi\right)\in H_{0}\oplus H_{1}$ the estimate 
\[
\Re\left\langle \left(z^{-1}\left(\begin{array}{cc}
M(z) & 0\\
0 & 0
\end{array}\right)+\left(\begin{array}{cc}
N_{00}(z) & N_{01}(z)\\
N_{10}(z) & N_{11}(z)
\end{array}\right)\right)\left(\begin{array}{c}
\phi\\
\psi
\end{array}\right),\left(\begin{array}{c}
\phi\\
\psi
\end{array}\right)\right\rangle \geq c_{0}\left\langle \left(\begin{array}{c}
\phi\\
\psi
\end{array}\right),\left(\begin{array}{c}
\phi\\
\psi
\end{array}\right)\right\rangle 
\]
is satisfied. 
\item For some $\beta\in]0,1]$ we have

\begin{enumerate}
\item \label{enu:ass_m}There is $c_{1}>0$ such that for all $z\in B_{\mathbb{C}}(r,r)$
and $\phi\in H_{0}$ the estimate 
\[
\Re\langle z^{\beta-1}M(z)\phi,\phi\rangle\geq c_{1}\langle\phi,\phi\rangle
\]
is satisfied and the mapping $B_{\mathbb{C}}(r,r)\ni z\mapsto z^{\beta-1}M(z)$
is bounded. 
\item \label{enu:ass_n11}If for all $z\in B_{\mathbb{C}}(r,r)$, we have
$\left(N_{11}(z)\right)^{-1}\in L(H_{1})$ then there is $c_{2}\in\mathbb{R}$
such that 
\[
\Re\left\langle \left(\left(z^{*}\right)^{\beta}N_{11}(z)\right)^{-1}\psi,\psi\right\rangle \geq c_{2}\langle\psi,\psi\rangle
\]
for all $\psi\in H_{1}.$ 
\end{enumerate}
\end{enumerate}
Some consequences of the latter assumptions are in order.
\begin{lem}
\label{lem:dmd}Assume that condition \eqref{enu:ass_m} holds. Then
$D(\partial_{0}M(\partial_{0}^{-1}))=H_{\nu}^{\beta}(\mathbb{R},H_{0})\coloneqq D(\partial_{0}^{\beta})$.\end{lem}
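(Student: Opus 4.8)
The plan is to diagonalise both operators via the Fourier--Laplace transform $\mathcal{L}_\nu$ and thereby reduce the asserted equality of domains to a pointwise norm equivalence of the corresponding symbols. Recall that under $\mathcal{L}_\nu$ the operator $\partial_0$ becomes multiplication by $\xi\mapsto\mathrm{i}\xi+\nu$, so that $\partial_0^{-1}$ becomes multiplication by $z(\xi)\coloneqq(\mathrm{i}\xi+\nu)^{-1}$ and, consequently, $M(\partial_0^{-1})$ becomes multiplication by $\xi\mapsto M(z(\xi))$. Since $M(\partial_0^{-1})$ is bounded, the natural domain of $\partial_0 M(\partial_0^{-1})$ is $\{u:M(\partial_0^{-1})u\in D(\partial_0)\}$, which after transformation is the set of $g\in L^2(\mathbb{R},H_0)$ with $\xi\mapsto z(\xi)^{-1}M(z(\xi))g(\xi)\in L^2(\mathbb{R},H_0)$, i.e.\ the domain of multiplication by the symbol $z^{-1}M(z)$. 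Likewise, $\partial_0^\beta$ is, by the functional calculus for the normal operator $\partial_0$, multiplication by $(\mathrm{i}\xi+\nu)^\beta=z^{-\beta}$ (principal branch, which is legitimate since $\mathrm{i}\xi+\nu$ lies in the open right half-plane), so $D(\partial_0^\beta)=H_\nu^\beta$ transforms to the domain of multiplication by $z^{-\beta}$.

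The second step is the algebraic observation that, on the right half-plane where the principal branch is multiplicative, one has the factorisation $z^{-1}M(z)=\bigl(z^{\beta-1}M(z)\bigr)\,z^{-\beta}$. Thus it suffices to show that the operator-valued symbol $z^{\beta-1}M(z)$ and its pointwise inverse are uniformly bounded in $z\in B_{\mathbb{C}}(r,r)$; then $\|z^{-1}M(z)g(\xi)\|$ and $\|z^{-\beta}g(\xi)\|$ are comparable for almost every $\xi$, whence the two $L^2$-integrability conditions defining the domains coincide.

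To carry out this third step I would invoke condition \eqref{enu:ass_m} directly. Its second clause gives the uniform upper bound $\|z^{\beta-1}M(z)\|\leq C$. The accretivity estimate $\Re\langle z^{\beta-1}M(z)\phi,\phi\rangle\geq c_1\|\phi\|^2$ yields, via Cauchy--Schwarz, the lower bound $\|z^{\beta-1}M(z)\phi\|\geq c_1\|\phi\|$ for all $\phi$ and all $z$; the same estimate applied to the adjoint (which has the same real part of its numerical range) shows $(z^{\beta-1}M(z))^*$ is bounded below as well, so $z^{\beta-1}M(z)$ has closed and dense range and is therefore boundedly invertible with $\|(z^{\beta-1}M(z))^{-1}\|\leq 1/c_1$, uniformly in $z$. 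Combining the bounds gives $c_1\|z^{-\beta}g(\xi)\|\leq\|z^{-1}M(z)g(\xi)\|\leq C\|z^{-\beta}g(\xi)\|$ pointwise, and integrating over $\xi$ establishes $D(\partial_0 M(\partial_0^{-1}))=D(\partial_0^\beta)$.

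I expect the only genuine subtlety --- the main obstacle --- to be the bookkeeping around the fractional powers: one must verify that the principal branch renders $z\mapsto z^{\beta-1}$ and $z\mapsto z^{-\beta}$ well defined and multiplicative on $B_{\mathbb{C}}(r,r)$ and consistent with the functional calculus definition of $\partial_0^\beta$, and that the multiplication-operator domains transform as claimed under $\mathcal{L}_\nu$. Once this is in place, the boundedness and accretivity estimates furnish the uniform two-sided bound routinely.
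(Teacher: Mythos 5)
Your proof is correct, but it follows a genuinely different route from the paper's. You diagonalise everything via $\mathcal{L}_\nu$ and reduce the claim to a pointwise two-sided estimate for the symbols: the factorisation $z^{-1}M(z)=\bigl(z^{\beta-1}M(z)\bigr)z^{-\beta}$ (legitimate for the principal branch since $B_{\mathbb{C}}(r,r)$ lies in the open right half-plane, so $\log(z^{-1})=-\log z$), combined with the uniform upper bound from condition \eqref{enu:ass_m} and the lower bound $\|z^{\beta-1}M(z)\phi\|\geq c_{1}\|\phi\|$ extracted from accretivity via Cauchy--Schwarz, gives $c_{1}\,|z^{-\beta}|\,\|g(\xi)\|\leq\|z^{-1}M(z)g(\xi)\|\leq C\,|z^{-\beta}|\,\|g(\xi)\|$ almost everywhere, and maximal domains of multiplication operators with comparable symbols coincide. (Your Lax--Milgram-type inversion of the accretive symbol, using $\Re\langle T^{*}\phi,\phi\rangle=\Re\langle T\phi,\phi\rangle$, is fine, though actually more than you need: the lower bound alone suffices for the domain comparison.) The paper instead stays at the operator level: it first proves $\partial_{0}M(\partial_{0}^{-1})=\overline{\partial_{0}^{1-\beta}M(\partial_{0}^{-1})\partial_{0}^{\beta}}$ by a mollification argument with $(1+\varepsilon\partial_{0})^{-1}$, then observes that $\partial_{0}^{1-\beta}M(\partial_{0}^{-1})$ is boundedly invertible by \eqref{enu:ass_m} (the same accretivity consequence you prove pointwise, lifted through the functional calculus), so that the product $\partial_{0}^{1-\beta}M(\partial_{0}^{-1})\partial_{0}^{\beta}$ is already closed and hence equals $\partial_{0}M(\partial_{0}^{-1})$; boundedness of $\partial_{0}^{1-\beta}M(\partial_{0}^{-1})$ then identifies the domain as $D(\partial_{0}^{\beta})$. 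What each approach buys: your diagonalisation makes the closure/mollifier step unnecessary, because on the multiplier side the natural domain of the composition is automatically the maximal domain of the product symbol, so the proof is arguably more transparent; the paper's argument, on the other hand, delivers as a by-product the operator identity $\partial_{0}M(\partial_{0}^{-1})=\partial_{0}^{1-\beta}M(\partial_{0}^{-1})\partial_{0}^{\beta}$, which is reused verbatim in the proof of Theorem \ref{thm:maxreck} (in the step $\Re\langle\partial_{0}M(\partial_{0}^{-1})u_{\epsilon},\partial_{0}^{\beta}u_{\epsilon}\rangle=\Re\langle\partial_{0}^{1-\beta}M(\partial_{0}^{-1})\partial_{0}^{\beta}u_{\epsilon},\partial_{0}^{\beta}u_{\epsilon}\rangle$), and its mollifier technique is the kind that transfers to situations without a clean spectral representation. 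Note that your symbol identity yields that same operator identity, so nothing downstream is lost; the branch bookkeeping you flag as the main obstacle is indeed routine for the reason you give.
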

\begin{proof}
We first show $\partial_{0}M(\partial_{0}^{-1})=\overline{\partial_{0}^{1-\beta}M(\partial_{0}^{-1})\partial_{0}^{\beta}}.$
Since $\partial_{0}^{1-\beta}M(\partial_{0}^{-1})\partial_{0}^{\beta}\subseteq\partial_{0}M(\partial_{0}^{-1})$
and \foreignlanguage{english}{$\partial_{0}M(\partial_{0}^{-1})$}
is closed as a product of a bounded and a closed operator, we get
\[
\overline{\partial_{0}^{1-\beta}M(\partial_{0}^{-1})\partial_{0}^{\beta}}\subseteq\partial_{0}M(\partial_{0}^{-1}).
\]
Let now $u\in D(\partial_{0}M(\partial_{0}^{-1}))$ and set $u_{\varepsilon}\coloneqq(1+\varepsilon\partial_{0})^{-1}u\in D(\partial_{0})\subseteq D(\partial_{0}^{\beta})$
for $\varepsilon>0.$ Then $u_{\varepsilon}\to u$ in $L_{\nu}^{2}(\mathbb{R},H_{0})$
as $\varepsilon\to0$ and 
\begin{align*}
\partial_{0}^{1-\beta}M(\partial_{0}^{-1})\partial_{0}^{\beta}u_{\varepsilon} & =\partial_{0}M(\partial_{0}^{-1})u_{\varepsilon}\\
 & =(1+\varepsilon\partial_{0})^{-1}\partial_{0}M(\partial_{0}^{-1})u\to\partial_{0}M(\partial_{0}^{-1})u\quad(\varepsilon\to0).
\end{align*}
Thus, $u\in D\left(\overline{\partial_{0}^{1-\beta}M(\partial_{0}^{-1})\partial_{0}^{\beta}}\right)$
with $\overline{\partial_{0}^{1-\beta}M(\partial_{0}^{-1})\partial_{0}^{\beta}}u=\partial_{0}M(\partial_{0}^{-1})u,$
which proves the asserted equality. Now, by condition \eqref{enu:ass_m},
the operator $\partial_{0}^{1-\beta}M(\partial_{0}^{-1})$ is boundedly
invertible on $L_{\nu}^{2}(\mathbb{R},H)$ and hence, the operator
$\partial_{0}^{1-\beta}M(\partial_{0}^{-1})\partial_{0}^{\beta}$
is closed. The latter yields $\partial_{0}^{1-\beta}M(\partial_{0}^{-1})\partial_{0}^{\beta}=\partial_{0}M(\partial_{0}^{-1})$.
But $\partial_{0}^{1-\beta}M(\partial_{0}^{-1})$ is a bounded operator,
since $z\mapsto z^{\beta-1}M(z)$ is bounded by condition \eqref{enu:ass_m}.
Hence, $D\left(\partial_{0}^{1-\beta}M(\partial_{0}^{-1})\partial_{0}^{\beta}\right)=H_{\nu}^{\beta}(\mathbb{R},H_{0})$
and the assertion follows.\end{proof}
\begin{lem}
\label{lem:n_11inv}Assume condition \eqref{enu:ass_m_n}. Then for
all $z\in B_{\mathbb{C}}(r,r)$, the operator $N_{11}(z)$ is continuously
invertible.\end{lem}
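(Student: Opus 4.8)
The plan is to extract a strict accretivity estimate for $N_{11}(z)$ alone from the block estimate \eqref{enu:ass_m_n} and then run the standard argument showing that a bounded, coercive operator on a Hilbert space is boundedly invertible. First I would specialize the coercivity inequality in \eqref{enu:ass_m_n} to test vectors of the form $(0,\psi)$, i.e.\ $\phi=0$. Applying the block operator to $(0,\psi)$ produces the pair $(N_{01}(z)\psi,\,N_{11}(z)\psi)$, and pairing this against $(0,\psi)$ in $H_0\oplus H_1$ annihilates the first component. Hence the estimate collapses to
\[
\Re\langle N_{11}(z)\psi,\psi\rangle\geq c_0\langle\psi,\psi\rangle\quad(\psi\in H_1),
\]
so that $N_{11}(z)$ is strictly accretive with the same constant $c_0>0$, for every $z\in B_{\mathbb{C}}(r,r)$.

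Next I would derive a lower bound. By the Cauchy--Schwarz inequality,
\[
c_0\|\psi\|^2\leq\Re\langle N_{11}(z)\psi,\psi\rangle\leq\|N_{11}(z)\psi\|\,\|\psi\|,
\]
whence $\|N_{11}(z)\psi\|\geq c_0\|\psi\|$ for all $\psi\in H_1$. This immediately yields injectivity of $N_{11}(z)$ and, since $N_{11}(z)$ is bounded, that its range is closed.

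To obtain surjectivity I would pass to the adjoint. Because $\Re\langle N_{11}(z)^{*}\psi,\psi\rangle=\Re\langle N_{11}(z)\psi,\psi\rangle$, the same accretivity estimate holds for $N_{11}(z)^{*}$, so $N_{11}(z)^{*}$ is injective as well. Therefore the range of $N_{11}(z)$ is dense; being also closed, it equals all of $H_1$. Consequently $N_{11}(z)$ is bijective, and the lower bound furnishes $\|N_{11}(z)^{-1}\|\leq 1/c_0$, uniformly in $z$.

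I do not expect a genuine obstacle here, since this is the routine argument for bounded coercive operators. The only points deserving care are verifying that the off-diagonal entries $N_{01}$ and $N_{10}$ really drop out when one restricts to $\phi=0$ (they do, as the first component is paired against $0$), and invoking the adjoint estimate to upgrade closed range to full surjectivity rather than merely bounded invertibility on the range.
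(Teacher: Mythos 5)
Your proposal is correct and follows essentially the same route as the paper, whose proof simply observes that setting $(\phi,\psi)=(0,\psi)$ in condition \eqref{enu:ass_m_n} yields the coercivity estimate $\Re\langle N_{11}(z)\psi,\psi\rangle\geq c_{0}\langle\psi,\psi\rangle$ and leaves the standard Lax--Milgram-type conclusion implicit. You have merely spelled out the routine details (lower bound via Cauchy--Schwarz, closed range, adjoint coercivity for surjectivity, and the uniform bound $\|N_{11}(z)^{-1}\|\leq 1/c_{0}$) that the paper calls ``immediate.''
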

\begin{proof}
The claim is immediate by putting $(\phi,\psi)=(0,\psi)$ in the positivity
estimate in condition \eqref{enu:ass_m_n}. \end{proof}
\begin{lem}
\label{lem:wt}Let $u\in L_{\nu}^{2}(\mathbb{R},H_{0})$. Then $u\in D(\partial_{0}^{\beta})$
if and only if $\sup_{\epsilon>0}\|\partial_{0}^{\beta}(1+\epsilon\partial_{0})^{-1}u\|<\infty.$ \end{lem}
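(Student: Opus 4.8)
The plan is to diagonalise $\partial_0$ by means of the Fourier--Laplace transform and thereby reduce the statement to a monotone convergence argument for a scalar multiplier. Recall from Section~\ref{sec:A-brief-description} that $\mathcal{L}_\nu$ is unitary and intertwines $\partial_0$ with multiplication by $\ii m+\nu$; hence, by the associated functional calculus, $\partial_0^\beta$ and $(1+\epsilon\partial_0)^{-1}$ correspond to multiplication by the functions $\xi\mapsto(\ii\xi+\nu)^\beta$ and $\xi\mapsto(1+\epsilon(\ii\xi+\nu))^{-1}$, respectively (the fractional power being unambiguously defined because $\Re(\ii\xi+\nu)=\nu>0$). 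Writing $\hat u\coloneqq\mathcal{L}_\nu u\in L^2(\mathbb{R},H_0)$ and exploiting unitarity, I would express, for each $\epsilon>0$,
\[
\|\partial_0^\beta(1+\epsilon\partial_0)^{-1}u\|^2=\int_{\mathbb{R}}m_\epsilon(\xi)\,\|\hat u(\xi)\|_{H_0}^2\,\mathrm{d}\xi,\qquad m_\epsilon(\xi)\coloneqq\frac{|\ii\xi+\nu|^{2\beta}}{(1+\epsilon\nu)^2+\epsilon^2\xi^2}.
\]

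The key observation is the monotone behaviour of the multiplier $m_\epsilon$ in the parameter $\epsilon$: for each fixed $\xi$ the denominator $(1+\epsilon\nu)^2+\epsilon^2\xi^2$ is strictly increasing in $\epsilon>0$, so the family $(m_\epsilon(\xi))_{\epsilon>0}$ increases pointwise to $|\ii\xi+\nu|^{2\beta}$ as $\epsilon\searrow0$. Consequently the map $\epsilon\mapsto\|\partial_0^\beta(1+\epsilon\partial_0)^{-1}u\|^2$ is itself monotone, whence the supremum over $\epsilon>0$ coincides with the limit as $\epsilon\to0$; by the monotone convergence theorem this limit equals $\int_{\mathbb{R}}|\ii\xi+\nu|^{2\beta}\|\hat u(\xi)\|_{H_0}^2\,\mathrm{d}\xi$, understood as a value in $[0,\infty]$.

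To conclude, I would recall that, again via the functional calculus, $u\in D(\partial_0^\beta)$ holds precisely when $\xi\mapsto(\ii\xi+\nu)^\beta\hat u(\xi)$ belongs to $L^2(\mathbb{R},H_0)$, that is, when $\int_{\mathbb{R}}|\ii\xi+\nu|^{2\beta}\|\hat u(\xi)\|_{H_0}^2\,\mathrm{d}\xi<\infty$. Comparing this with the expression obtained in the previous step, the latter integral is finite if and only if $\sup_{\epsilon>0}\|\partial_0^\beta(1+\epsilon\partial_0)^{-1}u\|<\infty$, which is exactly the asserted equivalence. The argument presents no genuine obstacle beyond routine bookkeeping; the only point deserving a moment's care is verifying the monotonicity of $m_\epsilon$ in $\epsilon$, since it is precisely this monotonicity that legitimises replacing the supremum by the limit $\epsilon\to0$ and thus invoking monotone convergence.
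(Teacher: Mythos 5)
Your proof is correct, but it takes a genuinely different route from the paper's. The paper argues abstractly: the ``only if'' direction follows from the contraction bound $\|(1+\epsilon\partial_{0})^{-1}\|\leq1$ together with the commutation $\partial_{0}^{\beta}(1+\epsilon\partial_{0})^{-1}u=(1+\epsilon\partial_{0})^{-1}\partial_{0}^{\beta}u$; for the converse, it picks a null sequence $(\epsilon_{n})_{n}$ along which the bounded family $\partial_{0}^{\beta}(1+\epsilon_{n}\partial_{0})^{-1}u$ converges \emph{weakly} (weak sequential compactness of bounded sets in a Hilbert space), and concludes from the weak closedness of the closed operator $\partial_{0}^{\beta}$ together with $(1+\epsilon_{n}\partial_{0})^{-1}u\to u$. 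You instead diagonalise $\partial_{0}$ via $\mathcal{L}_{\nu}$ and reduce everything to the scalar multiplier $m_{\epsilon}(\xi)=|\ii\xi+\nu|^{2\beta}/\bigl((1+\epsilon\nu)^{2}+\epsilon^{2}\xi^{2}\bigr)$; your monotonicity claim is sound, since the $\epsilon$-derivative of the denominator is $2\nu(1+\epsilon\nu)+2\epsilon\xi^{2}>0$ because $\nu>0$, so $m_{\epsilon}(\xi)$ increases pointwise to $|\ii\xi+\nu|^{2\beta}$ as $\epsilon\searrow0$ and monotone convergence applies. Your route buys more than the lemma asserts: it shows $\epsilon\mapsto\|\partial_{0}^{\beta}(1+\epsilon\partial_{0})^{-1}u\|$ is monotone and yields the quantitative identity $\sup_{\epsilon>0}\|\partial_{0}^{\beta}(1+\epsilon\partial_{0})^{-1}u\|^{2}=\int_{\mathbb{R}}|\ii\xi+\nu|^{2\beta}\|\hat{u}(\xi)\|_{H_{0}}^{2}\,\mathrm{d}\xi=\|\partial_{0}^{\beta}u\|^{2}$ (valued in $[0,\infty]$), with no compactness argument at all. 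What the paper's argument buys in exchange is portability: it uses nothing about $\partial_{0}^{\beta}$ beyond closedness and the commuting contractive approximation $(1+\epsilon\partial_{0})^{-1}\to1$ strongly, so it works verbatim for any closed operator in place of $\partial_{0}^{\beta}$, whereas yours is tied to the concrete spectral representation of $\partial_{0}$ and to the standard multiplication-operator description $D(\partial_{0}^{\beta})=\{u:\xi\mapsto(\ii\xi+\nu)^{\beta}\hat{u}(\xi)\in L^{2}(\mathbb{R},H_{0})\}$, which you correctly invoke (and which is legitimate here since $\Re(\ii\xi+\nu)=\nu>0$ fixes the branch, and $m_{\epsilon}$ is bounded for $\beta\leq1$, so $(1+\epsilon\partial_{0})^{-1}u\in D(\partial_{0}^{\beta})$ for every $u$).
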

\begin{proof}
From $\|(1+\epsilon\partial_{0})^{-1}\|\leq1$ for all $\epsilon>0$,
it follows that $u\in D(\partial_{0}^{\beta})$ is sufficient for
the supremum being finite. On the other hand, assume that the supremum
is finite. Then there is a sequence $(\epsilon_{n})_{n}$ in $(0,\infty)$
such that $\epsilon_{n}\to0$ as $n\to\infty$ and $v\coloneqq\lim_{n\to\infty}\partial_{0}^{\beta}(1+\epsilon_{n}\partial_{0})^{-1}u$
exists in the weak topology of $L_{\nu}^{2}(\mathbb{R},H_{0})$. By
the (weak) closedness of $\partial_{0}^{\beta}$ and the fact that
$(1+\epsilon_{n}\partial_{0})^{-1}u\to u$ as $n\to\infty$, we infer
$u\in D(\partial_{0}^{\beta})$.\end{proof}
\begin{thm}
\label{thm:maxreck} Assume conditions \eqref{enu:ass_m_n}, \eqref{enu:ass_m},
and \eqref{enu:ass_n11}. Then, for each $\nu>\frac{1}{2r},$ $B$
given in \eqref{eq:prob} is continuously invertible on $L_{\nu}^{2}(\mathbb{R},H_{0}\oplus H_{1})$
and for $(f,g)\in L_{\nu}^{2}(\mathbb{R},H_{0})\oplus H_{\nu}^{\beta}(\mathbb{R},H_{1}),$
we have $\overline{B}^{-1}(f,g)\in\left(H_{\nu}^{\beta}(\mathbb{R},H_{0})\oplus L_{\nu}^{2}(\mathbb{R},H_{1})\right)\cap\left(D\left(\left(\begin{array}{cc}
0 & -C^{*}\\
C & 0
\end{array}\right)\right)\right)$.\end{thm}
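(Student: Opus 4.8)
The plan is to split the statement into its two parts: continuous invertibility, which is essentially a citation of the general solution theory, and the regularity assertion, which is the real content and is obtained from a uniform a priori estimate on a regularised solution.

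For the invertibility I would recognise $B$ as an operator of the form treated in Theorem \ref{thm:st}. Setting $A:=\left(\begin{array}{cc}0&-C^{*}\\C&0\end{array}\right)$, which is skew-selfadjoint, and collecting the material law into
\[
\tilde M(z):=\left(\begin{array}{cc}M(z)&0\\0&0\end{array}\right)+z\left(\begin{array}{cc}N_{00}(z)&N_{01}(z)\\N_{10}(z)&N_{11}(z)\end{array}\right),
\]
which is analytic and bounded on $B_{\mathbb{C}}(r,r)$, we have $B=\partial_{0}\tilde M(\partial_{0}^{-1})+A$ on the stated natural domain. Since $z^{-1}\tilde M(z)$ is exactly the matrix occurring in condition \eqref{enu:ass_m_n}, that hypothesis supplies $\Re\langle z^{-1}\tilde M(z)\cdot,\cdot\rangle\geq c_{0}$, so Theorem \ref{thm:st} applies and gives $S:=\overline{B}^{-1}\in L(L_{\nu}^{2})$ commuting with $\partial_{0}^{-1}$.

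For the regularity, fix $(f,g)$ as in the statement, put $(u,v):=\overline{B}^{-1}(f,g)$, and regularise by $(u_{\epsilon},v_{\epsilon}):=(1+\epsilon\partial_{0})^{-1}(u,v)$. Since $(1+\epsilon\partial_{0})^{-1}$ commutes with $S$, Theorem \ref{thm:st} yields $(u_{\epsilon},v_{\epsilon})\in D(B)$ solving $B(u_{\epsilon},v_{\epsilon})=(f_{\epsilon},g_{\epsilon})$ with $f_{\epsilon},g_{\epsilon}$ the corresponding regularisations. Reading off the two rows, $\partial_{0}M(\partial_{0}^{-1})u_{\epsilon}=F_{\epsilon}+C^{*}v_{\epsilon}$ with $F_{\epsilon}:=f_{\epsilon}-N_{00}u_{\epsilon}-N_{01}v_{\epsilon}$, and $Cu_{\epsilon}=g_{\epsilon}-N_{10}u_{\epsilon}-N_{11}v_{\epsilon}$. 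From $\|(1+\epsilon\partial_{0})^{-1}\|\leq1$, all of $u_{\epsilon},v_{\epsilon},f_{\epsilon},g_{\epsilon}$ are bounded in $L_{\nu}^{2}$ uniformly in $\epsilon$, whence the second row gives a uniform bound on $\|Cu_{\epsilon}\|$ as well; this cheap $L^{2}$-bound is what will rescue the possibly negative constant $c_{2}$. The core is now a uniform bound on $\|\partial_{0}^{\beta}u_{\epsilon}\|$. By Lemma \ref{lem:n_11inv} together with the uniform estimate from \eqref{enu:ass_m_n}, $N_{11}(\partial_{0}^{-1})$ is boundedly invertible, so I substitute $v_{\epsilon}=N_{11}^{-1}(g_{\epsilon}-N_{10}u_{\epsilon}-Cu_{\epsilon})$ into the first row, write $K:=\partial_{0}^{1-\beta}M(\partial_{0}^{-1})$ and use $\partial_{0}M(\partial_{0}^{-1})=K\partial_{0}^{\beta}$ (from the proof of Lemma \ref{lem:dmd}). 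Pairing the first row with $\partial_{0}^{\beta}u_{\epsilon}$ and taking real parts, \eqref{enu:ass_m} gives $\Re\langle K\partial_{0}^{\beta}u_{\epsilon},\partial_{0}^{\beta}u_{\epsilon}\rangle\geq c_{1}\|\partial_{0}^{\beta}u_{\epsilon}\|^{2}$ on the left. In the coupling term I move $C^{*}$ across, commute $\partial_{0}^{\beta}$ past $C$ and past the functions $N_{ij}(\partial_{0}^{-1})^{\pm1}$ of $\partial_{0}^{-1}$, and use $\langle N_{11}^{-1}\chi,\partial_{0}^{\beta}\chi\rangle=\langle N_{11}^{-1}(\partial_{0}^{*})^{\beta}\chi,\chi\rangle$ for $\chi:=Cu_{\epsilon}$; condition \eqref{enu:ass_n11}, whose symbol $((z^{*})^{\beta}N_{11}(z))^{-1}$ is precisely that of $N_{11}(\partial_{0}^{-1})^{-1}(\partial_{0}^{*})^{\beta}$, then furnishes the G\r arding-type bound $\Re\langle N_{11}^{-1}\chi,\partial_{0}^{\beta}\chi\rangle\geq c_{2}\|Cu_{\epsilon}\|^{2}$. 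The remaining right-hand terms are handled by Cauchy--Schwarz and Young's inequality: the only one again producing $\partial_{0}^{\beta}u_{\epsilon}$, namely $\langle N_{11}^{-1}N_{10}(\partial_{0}^{*})^{\beta}u_{\epsilon},Cu_{\epsilon}\rangle$, is absorbed into $\tfrac{c_{1}}{2}\|\partial_{0}^{\beta}u_{\epsilon}\|^{2}$ using the uniform bound on $\|Cu_{\epsilon}\|$, while the $g_{\epsilon}$-terms are controlled by $\|\partial_{0}^{\beta}g\|$ (here $\|(\partial_{0}^{*})^{\beta}\cdot\|=\|\partial_{0}^{\beta}\cdot\|$). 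Collecting everything yields $\tfrac{c_{1}}{2}\|\partial_{0}^{\beta}u_{\epsilon}\|^{2}\leq\const$ uniformly in $\epsilon$.

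To finish, $\sup_{\epsilon}\|\partial_{0}^{\beta}u_{\epsilon}\|<\infty$ and Lemma \ref{lem:wt} give $u\in D(\partial_{0}^{\beta})=H_{\nu}^{\beta}(\mathbb{R},H_{0})$, hence $u\in D(\partial_{0}M(\partial_{0}^{-1}))$ by Lemma \ref{lem:dmd}, so $(u,v)$ lies in the domain of the material-law part $\partial_{0}\tilde M(\partial_{0}^{-1})$. Corollary \ref{cor:mrst} then forces $(u,v)\in D(A)$ with the equation holding literally, which is exactly $\overline{B}^{-1}(f,g)\in\left(H_{\nu}^{\beta}(\mathbb{R},H_{0})\oplus L_{\nu}^{2}(\mathbb{R},H_{1})\right)\cap D(A)$. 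I expect the main obstacle to be the rigorous justification of the domain and commutation steps in the coupling term — in particular that $\partial_{0}^{\beta}u_{\epsilon}\in D(C)$ with $C\partial_{0}^{\beta}u_{\epsilon}=\partial_{0}^{\beta}Cu_{\epsilon}$, and the exact identification of the symbol in \eqref{enu:ass_n11} with the operator $N_{11}(\partial_{0}^{-1})^{-1}(\partial_{0}^{*})^{\beta}$ — since the whole absorption hinges on these holding in tandem with the a priori $L^{2}$-bound on $Cu_{\epsilon}$ that neutralises a possibly negative $c_{2}$.
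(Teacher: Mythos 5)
Your proposal is correct and follows essentially the same route as the paper's proof: regularisation by $(1+\epsilon\partial_{0})^{-1}$, elimination of $v_{\epsilon}$ via Lemma \ref{lem:n_11inv}, testing the first row against $\partial_{0}^{\beta}u_{\epsilon}$ and identifying the symbol of $\left(\partial_{0}^{\beta}\right)^{*}\left(N_{11}(\partial_{0}^{-1})\right)^{-1}$ with $\left(\left(z^{*}\right)^{\beta}N_{11}(z)\right)^{-1}$ from condition \eqref{enu:ass_n11}, using the a priori bound on $\|Cu_{\epsilon}\|$ to neutralise a possibly negative $c_{2}$, and concluding with Lemmas \ref{lem:dmd}, \ref{lem:wt} and Corollary \ref{cor:mrst} (your Young-inequality absorption replaces the paper's quadratic inequality $\|\partial_{0}^{\beta}u_{\epsilon}\|^{2}\leq p\|\partial_{0}^{\beta}u_{\epsilon}\|+q$, an immaterial difference). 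The one step you flag but defer --- that $\partial_{0}^{\beta}u_{\epsilon}\in D(C)$ with $C\partial_{0}^{\beta}u_{\epsilon}=\partial_{0}^{\beta}Cu_{\epsilon}$ --- is settled in the paper exactly as your ingredients permit: let $\epsilon\to0$ in the second row and use the closedness of $C$ to get $u\in D(C)$ (together with the bound \eqref{eq:Cu} on $\|Cu\|$), whence $u_{\epsilon}$ and $\partial_{0}^{\beta}u_{\epsilon}$ lie in $D(C)$ because the bounded functions of $\partial_{0}$ involved commute with the lift of $C$.
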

\begin{proof}
We want to apply Corollary \ref{cor:mrst}. For this, we have to show
that 
\[
(u,v)\coloneqq\overline{B}^{-1}(f,g)\in D\left(\left(\begin{array}{cc}
\partial_{0}M(\partial_{0}^{-1}) & 0\\
0 & 0
\end{array}\right)+\left(\begin{array}{cc}
N_{00}(\partial_{0}^{-1}) & N_{01}(\partial_{0}^{-1})\\
N_{10}(\partial_{0}^{-1}) & N_{11}(\partial_{0}^{-1})
\end{array}\right)\right).
\]
By the boundedness of $\left(\begin{array}{cc}
N_{00}(\partial_{0}^{-1}) & N_{01}(\partial_{0}^{-1})\\
N_{10}(\partial_{0}^{-1}) & N_{11}(\partial_{0}^{-1})
\end{array}\right)$, we are left with showing that $u\in D(\partial_{0}M(\partial_{0}^{-1}))$.
By Lemma \ref{lem:dmd}, we need to show that $u\in D(\partial_{0}^{\beta})$.
Invoking Lemma \ref{lem:wt}, it suffices to show that 
\[
\sup_{\epsilon>0}\|\partial_{0}^{\beta}(1+\epsilon\partial_{0})^{-1}u\|<\infty.
\]
So, let $\epsilon>0$ and define $u_{\epsilon}\coloneqq(1+\epsilon\partial_{0})^{-1}u.$
We further set $v_{\epsilon}\coloneqq(1+\epsilon\partial_{0})^{-1}v$.
By Theorem \ref{thm:st} (note that $\left(\begin{array}{cc}
0 & -C^{*}\\
C & 0
\end{array}\right)$ is skew-selfadjoint; and that the needed inequality for the application
of Theorem \ref{thm:st} is warranted by \eqref{enu:ass_m_n}), we
have that 
\[
(u_{\epsilon},v_{\epsilon})\in D(B)=D\left(\left(\begin{array}{cc}
\partial_{0}M(\partial_{0}^{-1}) & 0\\
0 & 0
\end{array}\right)\right)\cap D\left(\left(\begin{array}{cc}
0 & -C^{*}\\
C & 0
\end{array}\right)\right).
\]
Thus, we read off $v_{\epsilon}\in D(C^{*})$ as well as $u_{\epsilon}\in D(C)\cap D(\partial_{0}M(\partial_{0}^{-1})).$
Moreover, we have the equalities 
\begin{align*}
\partial_{0}M(\partial_{0}^{-1})u_{\epsilon}+N_{00}(\partial_{0}^{-1})u_{\epsilon}+N_{01}(\partial_{0}^{-1})v_{\epsilon}-C^{*}v_{\epsilon} & =f_{\epsilon},\\
N_{11}(\partial_{0}^{-1})v_{\epsilon}+N_{10}(\partial_{0}^{-1})u_{\epsilon}+Cu_{\epsilon} & =g_{\epsilon},
\end{align*}
where $f_{\varepsilon}\coloneqq(1+\varepsilon\partial_{0})^{-1}f$
and $g_{\varepsilon}\coloneqq(1+\varepsilon\partial_{0})^{-1}g.$
Next, letting $\epsilon\to0$ in the second equality, we infer by
the closedness of $C$ that $u\in D(C)$ and 
\[
N_{11}(\partial_{0}^{-1})v+N_{10}(\partial_{0}^{-1})u+Cu=g.
\]
Furthermore, we get 
\begin{multline}
\|Cu\|\leq\|g\|+\|N_{11}(\partial_{0}^{-1})\|\|v\|+\|N_{10}(\partial_{0}^{-1})\|\|u\|\\
\leq\left(1+\frac{1}{c}\left(\|N_{11}(\partial_{0}^{-1})\|+\|N_{10}(\partial_{0}^{-1})\|\right)\right)\left(\|g\|+\|f\|\right),\label{eq:Cu}
\end{multline}
where we have used condition \eqref{enu:ass_m_n}. By Lemma \ref{lem:n_11inv},
we also get 
\[
v_{\epsilon}=\left(N_{11}(\partial_{0}^{-1})\right)^{-1}(-N_{10}(\partial_{0}^{-1})u_{\epsilon}-Cu_{\epsilon}+g_{\epsilon}).
\]
Substituting the latter equation into the first one, we arrive at
\begin{multline*}
\partial_{0}M(\partial_{0}^{-1})u_{\epsilon}+N_{00}(\partial_{0}^{-1})u_{\epsilon}+N_{01}(\partial_{0}^{-1})\left(N_{11}(\partial_{0}^{-1})\right)^{-1}(-N_{10}(\partial_{0}^{-1})u_{\epsilon}-Cu_{\epsilon}+g_{\epsilon})\\
-C^{*}\left(N_{11}(\partial_{0}^{-1})\right)^{-1}(-N_{10}(\partial_{0}^{-1})u_{\epsilon}-Cu_{\epsilon}+g_{\epsilon})=f_{\epsilon}.
\end{multline*}
Hence, 
\begin{align*}
\partial_{0}M(\partial_{0}^{-1})u_{\epsilon} & =-N_{00}(\partial_{0}^{-1})u_{\epsilon}+N_{01}(\partial_{0}^{-1})\left(N_{11}(\partial_{0}^{-1})\right)^{-1}N_{10}(\partial_{0}^{-1})u_{\epsilon}\\
 & \quad+N_{01}(\partial_{0}^{-1})\left(N_{11}(\partial_{0}^{-1})\right)^{-1}Cu_{\epsilon}-N_{01}(\partial_{0}^{-1})\left(N_{11}(\partial_{0}^{-1})\right)^{-1}g_{\epsilon}\\
 & \quad-C^{*}\left(N_{11}(\partial_{0}^{-1})\right)^{-1}\left(N_{10}(\partial_{0}^{-1})u_{\epsilon}+Cu_{\epsilon}-g_{\epsilon}\right)+f_{\epsilon}.
\end{align*}
We apply $\langle\cdot,\partial_{0}^{\beta}u_{\epsilon}\rangle_{L_{\nu}^{2}}$
to the latter equation, take real parts and use condition \eqref{enu:ass_m}
to get 
\begin{align*}
c_{1}\Re\langle\partial_{0}^{\beta}u_{\epsilon},\partial_{0}^{\beta}u_{\epsilon}\rangle & \leq\Re\langle\partial_{0}^{1-\beta}M(\partial_{0}^{-1})\partial_{0}^{\beta}u_{\epsilon},\partial_{0}^{\beta}u_{\epsilon}\rangle\\
 & =\Re\langle\partial_{0}M(\partial_{0}^{-1})u_{\epsilon},\partial_{0}^{\beta}u_{\epsilon}\rangle\\
 & =\Re\left\langle -N_{00}(\partial_{0}^{-1})u_{\epsilon}+N_{01}(\partial_{0}^{-1})\left(N_{11}(\partial_{0}^{-1})\right)^{-1}N_{10}(\partial_{0}^{-1})u_{\epsilon},\partial_{0}^{\beta}u_{\epsilon}\right\rangle \\
 & \quad+\Re\left\langle N_{01}(\partial_{0}^{-1})\left(N_{11}(\partial_{0}^{-1})\right)^{-1}Cu_{\epsilon}-N_{01}(\partial_{0}^{-1})\left(N_{11}(\partial_{0}^{-1})\right)^{-1}g_{\epsilon},\partial_{0}^{\beta}u_{\epsilon}\right\rangle \\
 & \quad+\Re\left\langle -C^{*}\left(N_{11}(\partial_{0}^{-1})\right)^{-1}\left(N_{10}(\partial_{0}^{-1})u_{\epsilon}+Cu_{\epsilon}-g_{\epsilon}\right),\partial_{0}^{\beta}u_{\epsilon}\right\rangle \\
 & \quad+\Re\left\langle f_{\epsilon},\partial_{0}^{\beta}u_{\epsilon}\right\rangle .
\end{align*}
We recall that $u\in D(C)$ and, hence, $u_{\epsilon}\in D(C)$ as
well as $\partial_{0}^{\beta}u_{\epsilon}\in D(C)$. Thus, we have
\begin{align*}
c_{1}\Re\langle\partial_{0}^{\beta}u_{\epsilon},\partial_{0}^{\beta}u_{\epsilon}\rangle & \leq\Re\left\langle -N_{00}(\partial_{0}^{-1})u_{\epsilon}+N_{01}(\partial_{0}^{-1})\left(N_{11}(\partial_{0}^{-1})\right)^{-1}N_{10}(\partial_{0}^{-1})u_{\epsilon},\partial_{0}^{\beta}u_{\epsilon}\right\rangle \\
 & \quad+\Re\left\langle N_{01}(\partial_{0}^{-1})\left(N_{11}(\partial_{0}^{-1})\right)^{-1}Cu_{\epsilon}-N_{01}(\partial_{0}^{-1})\left(N_{11}(\partial_{0}^{-1})\right)^{-1}g_{\epsilon},\partial_{0}^{\beta}u_{\epsilon}\right\rangle \\
 & \quad-\Re\left\langle \left(\partial_{0}^{\beta}\right)^{*}\left(N_{11}(\partial_{0}^{-1})\right)^{-1}\left(N_{10}(\partial_{0}^{-1})u_{\epsilon}+Cu_{\epsilon}-g_{\epsilon}\right),Cu_{\epsilon}\right\rangle \\
 & \quad+\Re\left\langle f_{\epsilon},\partial_{0}^{\beta}u_{\epsilon}\right\rangle .
\end{align*}
We note that apart from the term $\Re\left\langle \left(\partial_{0}^{\beta}\right)^{*}\left(N_{11}(\partial_{0}^{-1})\right)^{-1}\left(N_{10}(\partial_{0}^{-1})u_{\epsilon}+Cu_{\epsilon}-g_{\epsilon}\right),Cu_{\epsilon}\right\rangle $
the remaining terms of the right-hand side can by estimated by 
\[
K_{1}\|\partial_{0}^{\beta}u_{\epsilon}\|
\]
for some constant $K_{1}\geq0$, where we also used \eqref{eq:Cu}
as well as $\|(1+\epsilon\partial_{0})^{-1}\|\leq1$. For the treatise
of $\Re\left\langle \left(\partial_{0}^{\beta}\right)^{*}\left(N_{11}(\partial_{0}^{-1})\right)^{-1}\left(N_{10}(\partial_{0}^{-1})u_{\epsilon}+Cu_{\epsilon}-g_{\epsilon}\right),Cu_{\epsilon}\right\rangle $
we estimate with the help of condition \eqref{enu:ass_n11} (note
that the implication is not void by Lemma \ref{lem:n_11inv}) 
\begin{align*}
 & -\Re\left\langle \left(\partial_{0}^{\beta}\right)^{*}\left(N_{11}(\partial_{0}^{-1})\right)^{-1}\left(N_{10}(\partial_{0}^{-1})u_{\epsilon}+Cu_{\epsilon}-g_{\epsilon}\right),Cu_{\epsilon}\right\rangle \\
 & =-\Re\left\langle \left(\partial_{0}^{\beta}\right)^{*}\left(N_{11}(\partial_{0}^{-1})\right)^{-1}N_{10}(\partial_{0}^{-1})u_{\epsilon},Cu_{\epsilon}\right\rangle \\
 & \quad-\Re\left\langle \left(\partial_{0}^{\beta}\right)^{*}\left(N_{11}(\partial_{0}^{-1})\right)^{-1}Cu_{\epsilon}-\left(\partial_{0}^{\beta}\right)^{*}\left(N_{11}(\partial_{0}^{-1})\right)^{-1}g_{\epsilon},Cu_{\epsilon}\right\rangle \\
 & =-\Re\left\langle \left(N_{11}(\partial_{0}^{-1})\right)^{-1}N_{10}(\partial_{0}^{-1})\left(\partial_{0}^{\beta}\right)^{*}u_{\epsilon},Cu_{\epsilon}\right\rangle \\
 & \quad-\Re\left\langle \left(\partial_{0}^{\beta}\right)^{*}\left(N_{11}(\partial_{0}^{-1})\right)^{-1}Cu_{\epsilon},Cu_{\epsilon}\right\rangle +\Re\left\langle \left(N_{11}(\partial_{0}^{-1})\right)^{-1}\left(\partial_{0}^{\beta}\right)^{*}g_{\epsilon},Cu_{\epsilon}\right\rangle \\
 & \leq\left\Vert \left(N_{11}(\partial_{0}^{-1})\right)^{-1}N_{10}(\partial_{0}^{-1})\right\Vert \left\Vert \left(\partial_{0}^{\beta}\right)^{*}u_{\epsilon}\right\Vert \left\Vert Cu\right\Vert +|c_{1}|\left\Vert Cu\right\Vert ^{2}\\
 & \quad+\left\Vert \left(N_{11}(\partial_{0}^{-1})\right)^{-1}\right\Vert \left\Vert \left(\partial_{0}^{\beta}\right)^{*}g_{\epsilon}\right\Vert \left\Vert Cu\right\Vert \\
 & =\left\Vert \left(N_{11}(\partial_{0}^{-1})\right)^{-1}N_{10}(\partial_{0}^{-1})\right\Vert \left\Vert \partial_{0}^{\beta}u_{\epsilon}\right\Vert \left\Vert Cu\right\Vert +|c_{1}|\left\Vert Cu\right\Vert ^{2}\\
 & \quad+\left\Vert \left(N_{11}(\partial_{0}^{-1})\right)^{-1}\right\Vert \left\Vert \partial_{0}^{\beta}g\right\Vert \left\Vert Cu\right\Vert \\
 & \leq K_{2}\left\Vert \partial_{0}u_{\epsilon}\right\Vert +K_{3}
\end{align*}
for some $K_{2},K_{3}\geq0$, where we have again used \eqref{eq:Cu}.
Hence, we get for $p\coloneqq\left(K_{1}+K_{2}\right)/c\geq0$ and
$q\coloneqq K_{3}/c\geq0$ that 
\[
\left\Vert \partial_{0}^{\beta}u_{\epsilon}\right\Vert ^{2}\leq p\left\Vert \partial_{0}^{\beta}u_{\epsilon}\right\Vert +q,
\]
which implies 
\[
\left\Vert \partial_{0}^{\beta}u_{\epsilon}\right\Vert \leq\frac{p}{2}+\sqrt{\frac{p^{2}}{4}+q}.
\]
Thus, $u\in D(\partial_{0}^{\beta}),$ by Lemma \ref{lem:wt}. 
\end{proof}
Another, perhaps more familiar looking, maximal regularity result
can now be deduced from Theorem \ref{thm:maxreck}:
\begin{cor}
\label{cor:MR2nd}Assume conditions \eqref{enu:ass_m_n},\eqref{enu:ass_m}
and \eqref{enu:ass_n11} to be satisfied, $\nu>1/(2r)$. Then, for
all $f\in L_{\nu}^{2}(\mathbb{R},H_{0})$, there exists a unique 
\begin{eqnarray*}
u & \in & H_{\nu}^{\beta}(\mathbb{R},H_{0})\cap D\left(C^{*}N_{11}(\partial_{0}^{-1})^{-1}\left(C+N_{10}(\partial_{0}^{-1})\right)\right)
\end{eqnarray*}
satisfying 
\begin{multline}
\partial_{0}M(\partial_{0}^{-1})u+N_{00}\left(\partial_{0}^{-1}\right)u-N_{01}\left(\partial_{0}^{-1}\right)\left(N_{11}\left(\partial_{0}^{-1}\right)\right)^{-1}\left(C+N_{10}\left(\partial_{0}^{-1}\right)\right)u\\
+C^{*}\left(N_{11}\left(\partial_{0}^{-1}\right)\right)^{-1}\left(C+N_{10}\left(\partial_{0}^{-1}\right)\right)u=f.\label{eq:2ndo}
\end{multline}
\end{cor}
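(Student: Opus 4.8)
The plan is to recognise (eq:2ndo) as the Schur-complement reduction of the first-order block system governed by the operator $B$ of (eq:prob), and to obtain everything from Theorem \ref{thm:maxreck} by choosing the second component of the right-hand side to be zero. Throughout I suppress the argument $\partial_{0}^{-1}$ in $M,N_{ij}$ for readability.

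For existence, I would apply Theorem \ref{thm:maxreck} to the datum $(f,0)$; this is legitimate since $0\in H_{\nu}^{\beta}(\mathbb{R},H_{1})$. It yields $(u,v)\coloneqq\overline{B}^{-1}(f,0)$ with $u\in H_{\nu}^{\beta}(\mathbb{R},H_{0})$ and $(u,v)$ in the domain of the skew-selfadjoint part of (eq:prob); in particular $u\in D(C)$ and $v\in D(C^{*})$. By Lemma \ref{lem:dmd} we have $H_{\nu}^{\beta}(\mathbb{R},H_{0})=D(\partial_{0}M(\partial_{0}^{-1}))$, so $(u,v)$ lies in the natural domain $D(B)$, whence $\overline{B}(u,v)=B(u,v)=(f,0)$. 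Reading off the two rows gives
\begin{align*}
\partial_{0}M(\partial_{0}^{-1})u+N_{00}u+N_{01}v-C^{*}v & =f,\\
N_{11}v+N_{10}u+Cu & =0.
\end{align*}
By Lemma \ref{lem:n_11inv}, $N_{11}(\partial_{0}^{-1})$ is boundedly invertible, so the second row gives $v=-N_{11}^{-1}(C+N_{10})u$. In particular $N_{11}^{-1}(C+N_{10})u=-v\in D(C^{*})$, which is precisely the statement $u\in D\left(C^{*}N_{11}(\partial_{0}^{-1})^{-1}(C+N_{10})\right)$; substituting this expression for $v$ into the first row and collecting terms (using $N_{01}v-C^{*}v=-N_{01}N_{11}^{-1}(C+N_{10})u+C^{*}N_{11}^{-1}(C+N_{10})u$) produces exactly (eq:2ndo).

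For uniqueness I would reverse the reduction. Let $u$ satisfy (eq:2ndo) with $u\in H_{\nu}^{\beta}(\mathbb{R},H_{0})\cap D\left(C^{*}N_{11}(\partial_{0}^{-1})^{-1}(C+N_{10})\right)$. Since $N_{10}$ is bounded, this domain is contained in $D(C)$, so $v\coloneqq-N_{11}^{-1}(C+N_{10})u$ is well-defined, and the domain hypothesis gives $v\in D(C^{*})$. By construction the second row holds, and rearranging (eq:2ndo) shows the first row holds as well; together with $u\in H_{\nu}^{\beta}(\mathbb{R},H_{0})=D(\partial_{0}M(\partial_{0}^{-1}))$, $u\in D(C)$ and $v\in D(C^{*})$ this means $(u,v)\in D(B)$ and $B(u,v)=(f,0)$. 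Hence $\overline{B}(u,v)=(f,0)$, and the injectivity of $\overline{B}$ furnished by Theorem \ref{thm:st} forces $(u,v)=\overline{B}^{-1}(f,0)$, so $u$ is uniquely determined.

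The analytic content is entirely carried by Theorem \ref{thm:maxreck}, so no new estimate is required; the only point needing care is the domain bookkeeping. Specifically, one must check that the composite domain $D\left(C^{*}N_{11}(\partial_{0}^{-1})^{-1}(C+N_{10})\right)$ is exactly the condition guaranteeing $v\in D(C^{*})$, so that $u\mapsto(u,v)$ with $v=-N_{11}^{-1}(C+N_{10})u$ is a genuine bijection between solutions of the first-order system lying in $D(B)$ and solutions of (eq:2ndo) in the stated space. This is the step I would write out most carefully.
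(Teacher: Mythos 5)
Your proposal is correct and follows essentially the same route as the paper's own proof: solve the first-order system via Theorem \ref{thm:maxreck} with datum $(f,0)$, invert $N_{11}(\partial_{0}^{-1})$ by Lemma \ref{lem:n_11inv} to eliminate the second unknown, substitute into the first row to obtain \eqref{eq:2ndo}, and prove uniqueness by lifting any solution of \eqref{eq:2ndo} back to the uniquely solvable block system. Your explicit domain bookkeeping (identifying $v\in D(C^{*})$ with $u\in D\bigl(C^{*}N_{11}(\partial_{0}^{-1})^{-1}(C+N_{10}(\partial_{0}^{-1}))\bigr)$, and noting $D(C+N_{10}(\partial_{0}^{-1}))=D(C)$ by boundedness of $N_{10}(\partial_{0}^{-1})$) is merely a more careful spelling-out of steps the paper leaves implicit.
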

\begin{proof}
Using condition \eqref{enu:ass_m_n}, by Theorem \ref{thm:st}, we
infer the existence of a unique $\left(v,w\right)\in L_{\nu}^{2}(\mathbb{R},H_{0}\oplus H_{1})$
such that 
\[
\overline{\left(\partial_{0}\left(\begin{array}{cc}
M(\partial_{0}^{-1}) & 0\\
0 & 0
\end{array}\right)+\left(\begin{array}{cc}
N_{00}(\partial_{0}^{-1}) & N_{01}(\partial_{0}^{-1})\\
N_{10}(\partial_{0}^{-1}) & N_{11}(\partial_{0}^{-1})
\end{array}\right)+\left(\begin{array}{cc}
0 & -C^{*}\\
C & 0
\end{array}\right)\right)}\left(\begin{array}{c}
v\\
w
\end{array}\right)=\left(\begin{array}{c}
f\\
0
\end{array}\right).
\]
By Theorem \ref{thm:maxreck} (and Lemma \ref{lem:dmd}), we get 
\begin{align*}
\left(\begin{array}{c}
v\\
w
\end{array}\right) & \in H_{\nu}^{\beta}(\mathbb{R},H_{0})\oplus L_{\nu}^{2}(\mathbb{R},H_{1})\cap D\left(\left(\begin{array}{cc}
0 & -C^{*}\\
C & 0
\end{array}\right)\right)\\
 & \ =D\left(\partial_{0}\left(\begin{array}{cc}
M(\partial_{0}^{-1}) & 0\\
0 & 0
\end{array}\right)\right)\cap D\left(\left(\begin{array}{cc}
0 & -C^{*}\\
C & 0
\end{array}\right)\right)
\end{align*}
Hence, 
\begin{align}
\left(\begin{array}{c}
f\\
0
\end{array}\right) & =\overline{\left(\partial_{0}\left(\begin{array}{cc}
M(\partial_{0}^{-1}) & 0\\
0 & 0
\end{array}\right)+\left(\begin{array}{cc}
N_{00}(\partial_{0}^{-1}) & N_{01}(\partial_{0}^{-1})\\
N_{10}(\partial_{0}^{-1}) & N_{11}(\partial_{0}^{-1})
\end{array}\right)+\left(\begin{array}{cc}
0 & -C^{*}\\
C & 0
\end{array}\right)\right)}\left(\begin{array}{c}
v\\
w
\end{array}\right)\nonumber \\
 & =\left(\partial_{0}\left(\begin{array}{cc}
M(\partial_{0}^{-1}) & 0\\
0 & 0
\end{array}\right)+\left(\begin{array}{cc}
N_{00}(\partial_{0}^{-1}) & N_{01}(\partial_{0}^{-1})\\
N_{10}(\partial_{0}^{-1}) & N_{11}(\partial_{0}^{-1})
\end{array}\right)+\left(\begin{array}{cc}
0 & -C^{*}\\
C & 0
\end{array}\right)\right)\left(\begin{array}{c}
v\\
w
\end{array}\right)\nonumber \\
 & =\left(\begin{array}{c}
\partial_{0}M(\partial_{0}^{-1})v+N_{00}(\partial_{0}^{-1})v+N_{01}(\partial_{0}^{-1})w-C^{*}w\\
N_{10}(\partial_{0}^{-1})v+N_{11}(\partial_{0}^{-1})w+Cv
\end{array}\right).\label{eq:syst}
\end{align}
With Lemma \ref{lem:n_11inv}, we obtain from the second line 
\[
w=-\left(N_{11}(\partial_{0}^{-1})\right)^{-1}\left(C+N_{10}(\partial_{0}^{-1})\right)v.
\]
Substituting the latter equation into the first equation of \eqref{eq:syst},
we obtain \eqref{eq:2ndo}. On the other hand, given $u\in H_{\nu}^{\beta}(\mathbb{R},H_{0})\cap D\left(C^{*}N_{11}(\partial_{0}^{-1})^{-1}\left(C+N_{10}(\partial_{0}^{-1})\right)\right)$
satisfying \eqref{eq:2ndo}, we deduce that 
\[
\left(\begin{array}{c}
u\\
-\left(N_{11}(\partial_{0}^{-1})\right)^{-1}\left(C+N_{10}(\partial_{0}^{-1})\right)u
\end{array}\right)
\]
is a solution of \eqref{eq:syst}, the solution of which being unique.
Thus, the uniqueness statement is also settled. 
\end{proof}

\section{Some Examples}

Although the strength of the above result lies in the generalty of
the ``material laws'' accessible, the approach is perhaps best illustrated
and by making a link to known results obtained by a different approach.
In this spirit, our first example deals with paradigm of maximal regularity,
the heat equation, to illustrate the different perspective of our
approach on this issue. We then continue with slightly more complex
example cases from the literature, which may not be seen to be covered
by the general approach developed here. This includes a concluding
example for a fractional-in-time evolutionary problem.

\subsection{The heat equation }

As a warm-up example we consider the paradigmatic case of the heat
transport. Let $\Omega\subseteq\mathbb{R}^{3}$ be a non-empty open
where the heat transport is supposed to take place. We consider the
equations of heat conduction in the body $\Omega$, which consists
of the balance of momentum law 
\[
\partial_{0}\theta+\dive q=f,
\]
where $\theta:\mathbb{R}\times\Omega\to\mathbb{C}$ denotes the temperature
density, $q:\mathbb{R}\times\Omega\to\mathbb{C}^{3}$ stands for the
heat flux and $f:\mathbb{R}\times\Omega\to\mathbb{C}$ is an external
heat source forcing term, and Fourier's law 
\[
q=-k\grad\theta,
\]
where $k\in L({L^{2}(\Omega)^{3}},{L^{2}(\Omega)^{3}})$ is a bounded
selfadjoint operator satisfying 
\[
\Re\langle k\psi,\psi\rangle_{{L^{2}(\Omega)^{3}}}\geq c\langle\psi,\psi\rangle_{{L^{2}(\Omega)^{3}}}\quad(\psi\in{L^{2}(\Omega)^{3}})
\]
for some $c>0$, modeling the heat conductivity of the medium occupying
$\Omega$. If we impose suitable boundary conditions, say -- a homogeneous
Dirichlet boundary condition, on $\theta$, we end up with the following
system 
\begin{equation}
\left(\partial_{0}\left(\begin{array}{cc}
1 & 0\\
0 & 0
\end{array}\right)+\left(\begin{array}{cc}
0 & 0\\
0 & k^{-1}
\end{array}\right)+\left(\begin{array}{cc}
0 & \dive\\
\grad_{0} & 0
\end{array}\right)\right)\left(\begin{array}{c}
\theta\\
q
\end{array}\right)=\left(\begin{array}{c}
f\\
0
\end{array}\right),\label{eq:heat}
\end{equation}
where $\grad_{0}$ is defined as the distributional gradient with
domain $H_{0}^{1}(\Omega)$ and $\dive\coloneqq-\grad_{0}^{\ast}.$
Thus, we are indeed in the setting studied in the previous section.
Since $k$ is bounded, selfadjoint and strictly positive definite,
so is $k^{-1}.$ Thus, conditions \eqref{enu:ass_m} (with $\beta=1$)
and \eqref{enu:ass_m_n} are clearly satisfied. Moreover, for $z\in B_{\mathbb{C}}(r,r)$
we have $z^{-1}=\i t+\rho$ for some $\rho>\frac{1}{2r},t\in\mathbb{R}$
and hence, 
\[
\Re\langle\left(z^{\ast}k^{-1}\right)^{-1}\psi,\psi\rangle=\Re(-\ii t+\rho)\langle k\psi,\psi\rangle\geq\rho c\langle\psi,\psi\rangle
\]
for each $\psi\in{L^{2}(\Omega)^{3}},$ where we have used the selfadjointness
of $k$. This proves that condition \eqref{enu:ass_n11} is satisfied
and thus, Theorem \ref{thm:maxreck} yields maximal regularity of
\eqref{eq:heat}. In view of Corollary \ref{cor:MR2nd}, we end up
with the following result:
\begin{cor}
For all $\nu>0,$ $f\in L_{\nu}^{2}(\mathbb{R},L^{2}(\Omega))$, there
exists a unique $u\in H_{\nu}^{1}(\mathbb{R},{L^{2}(\Omega)^{3}})\cap D(\dive k\grad_{0})$
such that 
\[
\partial_{0}u-\dive k\grad_{0}u=f.
\]
\end{cor}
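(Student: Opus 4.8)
The plan is to read this off as the concrete instance of Corollary \ref{cor:MR2nd} attached to the first-order system \eqref{eq:heat}. First I would record the dictionary forced by \eqref{eq:heat}: the Hilbert spaces are $H_{0}=L^{2}(\Omega)$ (temperatures) and $H_{1}=L^{2}(\Omega)^{3}$ (fluxes); the skew-selfadjoint block is the off-diagonal operator with lower-left entry $\grad_{0}$ and upper-right entry $\dive$, so that $C=\grad_{0}$ and, using $\dive=-\grad_{0}^{*}$, one has $C^{*}=\grad_{0}^{*}=-\dive$; the material law has $M\equiv1$, $N_{00}=N_{01}=N_{10}=0$, $N_{11}\equiv k^{-1}$, and $\beta=1$. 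The three positivity hypotheses \eqref{enu:ass_m_n}, \eqref{enu:ass_m} and \eqref{enu:ass_n11} were already verified for exactly this data in the discussion preceding the statement, so Corollary \ref{cor:MR2nd} is available.

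Before applying it I would clarify why the claim holds for every $\nu>0$ and not merely for large $\nu$. Since $M$ and all the $N_{ij}$ are constant, they are bounded and analytic on $B_{\mathbb{C}}(r,r)$ for every radius $r>0$, and the verification of the three hypotheses is insensitive to the choice of $r$ (only the constants change). Hence, given any $\nu>0$, I would simply pick $r>1/(2\nu)$, which makes the hypothesis $\nu>1/(2r)$ of Corollary \ref{cor:MR2nd} hold; this is precisely the freedom that upgrades the conclusion to all $\nu>0$.

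It then remains to substitute the dictionary into \eqref{eq:2ndo}. The vanishing of $N_{00}$, $N_{01}$ and $N_{10}$ annihilates all terms but two, and $\left(N_{11}(\partial_{0}^{-1})\right)^{-1}=k$, so \eqref{eq:2ndo} reduces to $\partial_{0}u+C^{*}kCu=f$, i.e. $\partial_{0}u-\dive k\grad_{0}u=f$. The regularity and domain assertions, $u\in H_{\nu}^{1}(\mathbb{R},L^{2}(\Omega))\cap D(\dive k\grad_{0})$, together with uniqueness, are then inherited verbatim from the membership $u\in H_{\nu}^{\beta}(\mathbb{R},H_{0})\cap D\!\left(C^{*}N_{11}(\partial_{0}^{-1})^{-1}(C+N_{10}(\partial_{0}^{-1}))\right)$ supplied by that corollary (note $D(-\dive k\grad_{0})=D(\dive k\grad_{0})$). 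I do not expect a genuine obstacle here, since all the analysis is already packaged in Theorem \ref{thm:maxreck}; the only points that require attention are the correct sign bookkeeping $C^{*}=-\dive$, which is what turns the abstract term $C^{*}kC$ into $-\dive k\grad_{0}$, and the enlargement of $r$ just described.
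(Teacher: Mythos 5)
Your proposal is correct and follows essentially the same route as the paper: it verifies conditions \eqref{enu:ass_m_n}, \eqref{enu:ass_m} (with $\beta=1$) and \eqref{enu:ass_n11} for the data $M\equiv1$, $N_{11}\equiv k^{-1}$, $C=\grad_{0}$ (hence $C^{*}=-\dive$), and reads the claim off Corollary \ref{cor:MR2nd}, with your explicit observation that the constancy of the coefficients permits choosing $r>1/(2\nu)$ for arbitrary $\nu>0$ merely making precise what the paper leaves implicit. One incidental point: the space you use, $H_{\nu}^{1}(\mathbb{R},L^{2}(\Omega))$, is the correct one given $H_{0}=L^{2}(\Omega)$; the exponent $3$ appearing in the stated corollary is a typo in the paper.
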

\begin{rem}
We emphasize that each boundary condition yielding an operator matrix
of the form $\left(\begin{array}{cc}
0 & -C^{\ast}\\
C & 0
\end{array}\right)$ allows the application of Theorem \ref{thm:maxreck}. For several
examples of such boundary condition, including mixed and non-local
ones we refer to \cite{Picard2016_graddiv}. 
\end{rem}

\subsection{A second order equation}

Following \cite[Example 6.1]{Batty2015}, where the much deeper issue
of maximal regularity in certain interpolation spaces is addressed,
we consider the equation 
\[
\partial_{0}^{2}\theta+C^{*}\left(A+B\partial_{0}\right)C\theta=f,
\]
where $C:D(C)\subseteq H_{0}\to H_{1}$ is densely defined closed
and linear between the two Hilbert spaces $H_{0}$ and $H_{1}$, and
$B\in L(H_{1})$ is selfadjoint, strictly positive definite and $A\in L(H_{1})$.
First, we note that for $\rho>0$ large enough the operator $(A+B\partial_{0})=\partial_{0}\left(A\partial_{0}^{-1}+B\right)=\partial_{0}B\left(B^{-1}A\partial_{0}^{-1}+1\right)$
is continuously invertible on $L_{\rho}^{2}(\mathbb{R},H_{1}),$ due
to a Neumann series argument (for this recall that $\|\partial_{0}^{-1}\|\leq1/\rho$).
Hence, setting $w\coloneqq\partial_{0}\theta,q\coloneqq-(A+B\partial_{0})C\theta$,
we may rewrite the above problem as a first order equation of the
form 
\[
\left(\partial_{0}\left(\begin{array}{cc}
1 & 0\\
0 & 0
\end{array}\right)+\left(\begin{array}{cc}
0 & 0\\
0 & \left(B^{-1}A\partial_{0}^{-1}+1\right)^{-1}B^{-1}
\end{array}\right)+\left(\begin{array}{cc}
0 & -C^{*}\\
C & 0
\end{array}\right)\right)\left(\begin{array}{c}
w\\
q
\end{array}\right)=\left(\begin{array}{c}
f\\
0
\end{array}\right).
\]

Thus, Theorem \ref{thm:maxreck} is applicable with the choices 
\[
M(z)=1,\quad N(z)=\left(\begin{array}{cc}
0 & 0\\
0 & (Az+B)^{-1}
\end{array}\right),\quad g=0.
\]
Indeed, condition \eqref{enu:ass_m} (for $\beta=1$) is obviously
satisfied while condition \eqref{enu:ass_m_n} follows from 
\begin{align*}
\Re\langle N_{11}(z)\psi,\psi\rangle_{H_{1}} & =\Re\langle B^{-1}\psi,\psi\rangle-\Re z\langle B^{-1}AB^{-1}(AB^{-1}z+1)^{-1}\psi,\psi\rangle_{H_{1}}\\
 & \geq c\langle\psi,\psi\rangle-\frac{\|B^{-1}AB^{-1}\|}{\frac{1}{|z|}-\|B^{-1}A\|}\langle\psi,\psi\rangle_{H_{1}}\\
 & \geq\left(c-\frac{\|B^{-1}AB^{-1}\|}{\frac{1}{r}-\|B^{-1}A\|}\right)\langle\psi,\psi\rangle_{H_{1}}\quad(\psi\in H_{1})
\end{align*}
for $z\in B_{\mathbb{C}}(r,r)$ with $r>0$ small enough (which corresponds
to $\rho>0$ large enough in the argumentation above), where $c>0$
is a positive definiteness constant of $B^{-1}$, that is, $B^{-1}\geq c$.
For showing condition \eqref{enu:ass_n11} (for $\beta=1$), we compute
\[
\Re\langle\left(z^{\ast}\right)^{-1}(Az+B)\psi,\psi\rangle_{H_{1}}\geq-\|A\|\langle\psi,\psi\rangle+\frac{1}{2r}c'\langle\psi,\psi\rangle,
\]
for each $z\in B_{\mathbb{C}}(r,r)$, where we have used the selfadjointness
of $B$ and that $B\geq c'$ for some $c'>0$ by assumption. The corresponding
statement for the equation, we originally started out with is as follows.
\begin{cor}
There exists $\nu_{0}>0$ such that for all $\nu\geq\nu_{0}$ the
following holds: For all $f\in L_{\nu}^{2}(\mathbb{R},H_{0})$ there
exists a unique $\theta\in H_{\nu}^{2}(\mathbb{R},H_{0})\cap D\left(C^{*}\left(A+B\partial_{0}\right)C\right)$
satisfying 
\[
\partial_{0}^{2}\theta+C^{*}\left(A+B\partial_{0}\right)C\theta=f.
\]
\end{cor}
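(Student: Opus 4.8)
The plan is to obtain the statement directly from Corollary~\ref{cor:MR2nd}, applied to the first-order reformulation set up above with the material laws $M(z)=1$ and $N(z)$ displayed above, and then to undo the substitution $w=\partial_{0}\theta$. First I would fix $\nu_{0}>0$ large enough (equivalently $r=1/(2\nu)$ small enough) that the Neumann series for $(A+B\partial_{0})^{-1}$ converges and that the positivity constant in condition~\eqref{enu:ass_m_n} computed above is strictly positive; for every $\nu\geq\nu_{0}$ all three hypotheses~\eqref{enu:ass_m_n}, \eqref{enu:ass_m} (with $\beta=1$) and~\eqref{enu:ass_n11} then hold, as was verified, so Corollary~\ref{cor:MR2nd} applies.

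I would then specialise equation~\eqref{eq:2ndo}. Since $N_{00}=N_{01}=N_{10}=0$ and $\left(N_{11}(\partial_{0}^{-1})\right)^{-1}=A\partial_{0}^{-1}+B$, Corollary~\ref{cor:MR2nd} yields a unique
\[
u\in H_{\nu}^{1}(\mathbb{R},H_{0})\cap D\left(C^{*}\left(A\partial_{0}^{-1}+B\right)C\right)
\]
with $\partial_{0}u+C^{*}\left(A\partial_{0}^{-1}+B\right)Cu=f$. Setting $\theta\coloneqq\partial_{0}^{-1}u$, the bounded invertibility of $\partial_{0}^{-1}$ together with $u\in D(\partial_{0})$ gives $\theta\in D(\partial_{0}^{2})=H_{\nu}^{2}(\mathbb{R},H_{0})$ and $\partial_{0}u=\partial_{0}^{2}\theta$.

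The core of the argument is to carry the domain information and the operator identity over from $u$ to $\theta$. Using that the lift of the spatial operator $C$ commutes with $\partial_{0}^{-1}$, from $u\in D(C)$ I get $\theta=\partial_{0}^{-1}u\in D(C)$ with $C\theta=\partial_{0}^{-1}Cu$, hence $C\theta\in D(\partial_{0})$ and $\partial_{0}C\theta=Cu$. As $A,B\in L(H_{1})$ this gives $(A+B\partial_{0})C\theta=A\partial_{0}^{-1}Cu+BCu=\left(A\partial_{0}^{-1}+B\right)Cu$, which lies in $D(C^{*})$ precisely because $u\in D\left(C^{*}\left(A\partial_{0}^{-1}+B\right)C\right)$. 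Therefore $\theta\in D\left(C^{*}(A+B\partial_{0})C\right)$ with $C^{*}(A+B\partial_{0})C\theta=C^{*}\left(A\partial_{0}^{-1}+B\right)Cu$, and combining this with $\partial_{0}^{2}\theta=\partial_{0}u$ recovers $\partial_{0}^{2}\theta+C^{*}(A+B\partial_{0})C\theta=f$.

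For uniqueness I would run the correspondence in reverse: any $\theta\in H_{\nu}^{2}(\mathbb{R},H_{0})\cap D\left(C^{*}(A+B\partial_{0})C\right)$ solving the second-order equation produces $u\coloneqq\partial_{0}\theta\in H_{\nu}^{1}(\mathbb{R},H_{0})\cap D\left(C^{*}\left(A\partial_{0}^{-1}+B\right)C\right)$ solving~\eqref{eq:2ndo}, so $u$ coincides with the unique solution furnished by Corollary~\ref{cor:MR2nd}, and since $\partial_{0}$ is injective, $\theta=\partial_{0}^{-1}u$ is unique. I expect the only delicate point to be this bookkeeping for the unbounded composition $C^{*}(A+B\partial_{0})C$ --- in particular the rigorous identity $(A+B\partial_{0})C\theta=\left(A\partial_{0}^{-1}+B\right)Cu$ on the appropriate domains --- while the genuine analytic work, namely the a priori bound yielding $u\in H_{\nu}^{\beta}$, has already been carried out in Theorem~\ref{thm:maxreck}.
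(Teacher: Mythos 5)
Your proposal is correct and follows essentially the same route as the paper: the paper's own proof simply invokes Corollary~\ref{cor:MR2nd} with $\beta=1$ for the first-order reformulation and undoes the substitution $w=\partial_{0}\theta$ to pass from $w\in H_{\nu}^{1}(\mathbb{R},H_{0})$ to $\theta\in H_{\nu}^{2}(\mathbb{R},H_{0})$. You merely spell out the domain bookkeeping (the commutation of the lift of $C$ with $\partial_{0}^{-1}$ and the identity $(A+B\partial_{0})C\theta=\left(A\partial_{0}^{-1}+B\right)Cw$) and the reverse correspondence for uniqueness, which the paper leaves implicit, and you do so correctly.
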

\begin{proof}
Again, we rely on Corollary \ref{cor:MR2nd} for $\beta=1$. Note
that in the above computations, we used the substitution $w=\partial_{0}\theta.$
We infer that $w\in H_{\nu}^{1}(\mathbb{R},H_{0})$, which yields
$\theta\in H_{\nu}^{2}(\mathbb{R},H_{0})$. 
\end{proof}

\subsection{A second order integro-differential equation}

Let $C:D(C)\subseteq H_{0}\to H_{1}$ densely defined closed and linear,
$k:\mathbb{R}_{\geq0}\to L(H_{1})$ weakly measurable, such that $t\mapsto\|k(t)\|$
is measurable and $|k|_{L_{\rho_{0}}^{1}}\coloneqq\intop_{0}^{\infty}\|k(t)\|e^{-\rho_{0}t}\mbox{ d}t<\infty$
for some $\rho_{0}>0$. Moreover, let $A,B\in L(H_{1})$ with $A$
selfadjoint and strictly positive definite. We consider the following
equation 
\begin{equation}
\left(\partial_{0}^{2}+C^{\ast}\left(\partial_{0}A+B+k\ast\right)C\right)u=f,\label{eq:integro_orig}
\end{equation}
where the convolution operator $k\ast$ is defined by 
\begin{align*}
k\ast:L_{\rho}^{2}(\mathbb{R},H_{1}) & \to L_{\rho}^{2}(\mathbb{R},H_{1})\\
g & \mapsto\left(t\mapsto\intop_{0}^{\infty}k(s)g(t-s)\mbox{ d}s\right)
\end{align*}
for $\rho\geq\rho_{0}.$ By Young's inequality we have that 
\[
\|k\ast\|_{L(L_{\rho}^{2}(\mathbb{R},H_{1}))}\leq|k|_{L_{\rho}^{1}}\leq|k|_{L_{\rho_{0}}^{1}}<\infty,
\]
so that $k\ast$ is a bounded linear operator on $L_{\rho}^{2}(\mathbb{R},H_{1})$
for each $\rho\geq\rho_{0}.$ Moreover, by monotone convergence, we
get that $\limsup_{\rho\to\infty}\|k\ast\|_{L(L_{\rho}^{2}(\mathbb{R},H_{1}))}\leq\lim_{\rho\to\infty}|k|_{L_{\rho}^{1}}=0$.
For a treatment of integro-differential equations within the framework
of evolutionary problems we refer to \cite{Trostorff2012_integro},
where this is a special case in the discussion of problems with monotone
relations. We rewrite the above problem as a first order problem in
the new unknowns $v\coloneqq\partial_{0}u$ and $q\coloneqq-\left(A+\partial_{0}^{-1}\left(B+k\ast\right)\right)Cv.$
Thus, we arrive at 
\begin{equation}
\left(\partial_{0}\left(\begin{array}{cc}
1 & 0\\
0 & 0
\end{array}\right)+\left(\begin{array}{cc}
0 & 0\\
0 & \left(A+\partial_{0}^{-1}(B+k\ast)\right)^{-1}
\end{array}\right)+\left(\begin{array}{cc}
0 & -C^{\ast}\\
C & 0
\end{array}\right)\right)\left(\begin{array}{c}
v\\
q
\end{array}\right)=\left(\begin{array}{c}
f\\
0
\end{array}\right).\label{eq:integro}
\end{equation}
We note that the operator $A+\partial_{0}^{-1}(B+k\ast)$ is indeed
boundedly invertible on $L_{\rho}^{2}(\mathbb{R},H_{1})$ for sufficiently
large $\rho>0,$ since 
\[
A+\partial_{0}^{-1}(B+k\ast)=A\left(1+\partial_{0}^{-1}A^{-1}\left(B+k\ast\right)\right)
\]
and 
\[
\|\partial_{0}^{-1}A^{-1}\left(B+k\ast\right)\|_{L(L_{\rho}^{2}(\mathbb{R},H_{1}))}\leq\frac{1}{\rho}\|A^{-1}\|\left(\|B\|+|k|_{L_{\rho}^{1}}\right)<1
\]
for $\rho$ sufficiently large. Moreover, we note that the above problem
is an equation of the form discussed in Section \ref{sec:The-main-result}
with 
\[
N_{11}(z)\coloneqq\left(A+z\left(B+\sqrt{2\pi}\hat{k}(-\ii z^{-1})\right)\right)^{-1},
\]
where $\hat{k}$ denotes the Fourier-transform of $k$ (see \cite{Trostorff2012_integro}
for more details). Condition \eqref{enu:ass_m} (for $\beta=1$) is
obviously satisfied in this situation. Moreover, since 
\[
N_{11}(z)=A^{-1}+A^{-1}\sum_{k=1}^{\infty}(-z)^{k}\left(\left(B+\sqrt{2\pi}\:\hat{k}(-\ii z^{-1})\right)A^{-1}\right)^{k}
\]

by Neumann series expansion, we infer that $\Re N_{11}(z)$ is uniformly
strictly positive definite for $z\in B_{\mathbb{C}}(\frac{1}{2\rho},\frac{1}{2\rho})$
for $\rho>0$ large enough, since $A^{-1}$ is strictly positive definite
and 
\begin{align*}
 & \sup_{z\in B_{\mathbb{C}}(\frac{1}{2\rho},\frac{1}{2\rho})}\left\Vert A^{-1}\sum_{k=1}^{\infty}(-z)^{k}\left(\left(B+\sqrt{2\pi}\:\hat{k}(-\ii z^{-1})\right)A^{-1}\right)^{k}\right\Vert \\
 & \leq\sup_{z\in B_{\mathbb{C}}(\frac{1}{2\rho},\frac{1}{2\rho})}\|A^{-1}\|\left(\frac{|z|\left(\|B\|+|k|_{L_{\rho_{0}}^{1}}\right)\|A^{-1}\|}{1-|z|\left(\|B\|+|k|_{L_{\rho_{0}}^{1}}\right)\|A^{-1}\|}\right)\to0\quad(\rho\to\infty).
\end{align*}
This yields that condition \eqref{enu:ass_m_n} is also satisfied.
Finally, using the representation $z^{-1}=\ii t+\rho$ for some $t\in\mathbb{R},\rho>\rho_{0}$
large enough, we obtain 
\begin{align*}
\Re\left(z^{\ast}N_{11}(z)\right)^{-1} & =\Re\left(z^{\ast}\right)^{-1}A+\Re\frac{z}{z^{\ast}}\left(B+\sqrt{2\pi}\hat{k}(-\ii z^{-1})\right)\\
 & \geq\rho A-\left(\|B\|+|k|_{L_{\rho}^{1}}\right)\\
 & \geq\rho_{0}c-\left(\|B\|+|k|_{L_{\rho_{0}}^{1}}\right),
\end{align*}
with $c>0$ such that $A\geq c.$ This shows condition \eqref{enu:ass_n11}
($\beta=1$). Thus, Corollary \ref{cor:MR2nd} applies with $\beta=1$
and yields the maximal regularity of \eqref{eq:integro_orig}.
\begin{rem}
The maximal regularity of a similar problem as \eqref{eq:integro_orig}
was studied in \cite{Sforza1995} in a Banach space setting, where
the operators $A$ and $B$ were replaced by real scalars, the kernel
$k$ was assumed to be real-valued and the operator $C^{\ast}C$ was
replaced by a generator of an analytic semigroup. 
\end{rem}

\subsection{A partial differential equation of fractional type}

We conclude with the following example taken from \cite{Ponce2013},
where the maximal regularity of the equation 
\[
\partial_{0}^{\beta}u-(1+k*)Au=f
\]
has been addressed in spaces of (Banach space-valued) Hölder continuous
functions for some $\beta\in]0,1[$. Here $A$ is a sectorial operator
and $k$ is a suitable integrable, scalar-valued function, which is
supported in the positive reals only. As the case of convolutions
has been addressed in the previous two subsections, already, we focus
on the simplified equation 
\begin{equation}
\partial_{0}^{\beta}u+C^{*}Cu=f,\label{eq:frac}
\end{equation}
where $C\colon D(C)\subseteq H_{0}\to H_{1}$ is densely defined and
closed in the Hilbert spaces $H_{0}$ and $H_{1}$. We show that the
equation \eqref{eq:frac} admits maximal regularity in $L_{\rho}^{2}(\mathbb{R},H_{0})$
for all $\rho>0$. So, let $\rho>0$. Setting $q\coloneqq-Cu$, a
corresponding 2-by-2 block operator matrix formulation reads 
\[
\left(\partial_{0}\left(\begin{array}{cc}
\partial_{0}^{\beta-1} & 0\\
0 & 0
\end{array}\right)+\left(\begin{array}{cc}
0 & 0\\
0 & 1
\end{array}\right)+\left(\begin{array}{cc}
0 & -C^{*}\\
C & 0
\end{array}\right)\right)\left(\begin{array}{c}
u\\
q
\end{array}\right)=\left(\begin{array}{c}
f\\
0
\end{array}\right).
\]
We want to apply Theorem \ref{thm:maxreck} (or Corollary \ref{cor:MR2nd})
to 
\[
M(z)=z^{1-\beta},\quad N(z)=\left(\begin{array}{cc}
0 & 0\\
0 & 1
\end{array}\right).
\]
For this, note that condition \eqref{enu:ass_m} is satisfied, since
for all $r>1/(2\rho)$, we have 
\[
\Re\langle z^{\beta-1}M(z)\phi,\phi\rangle=\Re\langle z^{\beta-1}z^{1-\beta}\phi,\phi\rangle=\langle\phi,\phi\rangle\quad(z\in B_{\mathbb{C}}(r,r),\phi\in H_{0})
\]
Next, condition \eqref{enu:ass_m_n} follows from \cite[Lemma 2.1]{Picard2013_fractional},
which says 
\[
\Re\partial_{0}^{\beta}\geq\rho^{\beta}.
\]
For a proof of condition \eqref{enu:ass_n11}, we observe that by
\cite[Lemma 2.1]{Picard2013_fractional}, we have $\Re\left(\left(z^{*}\right)^{\beta}\right)^{-1}=\Re\left(z^{\beta}\right)^{-1}\geq\rho^{\beta}.$
Hence, we arrive at the following maximal regularity result for \eqref{eq:frac}.
\begin{cor}
For all $\rho>0$, $f\in L_{\rho}^{2}(\mathbb{R},H_{0}),$ the equation
\eqref{eq:frac} admits a unique solution $u\in H_{\rho}^{\beta}(\mathbb{R},H_{0})\cap D(C^{*}C)$. 
\end{cor}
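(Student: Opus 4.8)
The plan is to apply Corollary \ref{cor:MR2nd} directly, with the material law given by $M(z)=z^{1-\beta}$ and $N(z)$ the diagonal matrix whose only nonzero entry is $N_{11}=1$ (so $N_{00}=N_{01}=N_{10}=0$). Before invoking the corollary one must fix the geometric data: given $\rho>0$ I would set $\nu=\rho$ and choose any radius $r>1/(2\rho)$, so that the spectrum $\partial B_{\mathbb C}(1/(2\nu),1/(2\nu))$ of $\partial_0^{-1}$ lies strictly inside $B_{\mathbb C}(r,r)$ and the functional calculus for $M(\partial_0^{-1})$ is available. On this disc $z\mapsto z^{1-\beta}$ is genuinely admissible: the open ball $B_{\mathbb C}(r,r)$ sits in the open right half plane, meeting $0$ only on its boundary, so the principal branch yields an analytic function with $|z^{1-\beta}|\le(2r)^{1-\beta}$.

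Next I would verify the three structural hypotheses. Condition \eqref{enu:ass_m} is immediate with $c_1=1$, since $z^{\beta-1}M(z)=z^{\beta-1}z^{1-\beta}=1$ is (trivially) bounded. For the other two I would reduce each to a scalar positivity estimate for the fractional-power symbol. Writing $z^{-1}=\ii t+\rho$ with $\rho>1/(2r)$ and using $z^{-1}M(z)=z^{-\beta}=(z^{-1})^{\beta}$, condition \eqref{enu:ass_m_n} splits into the trivial bound $\langle\psi,\psi\rangle\ge\langle\psi,\psi\rangle$ on the $H_1$-block together with $\Re\langle z^{-\beta}\phi,\phi\rangle\ge c_0\langle\phi,\phi\rangle$ on the $H_0$-block; the latter is exactly the symbol form of $\Re\partial_0^\beta\ge\rho^\beta$ from \cite[Lemma 2.1]{Picard2013_fractional}. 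Likewise, since $N_{11}=1$, condition \eqref{enu:ass_n11} reduces to $\Re\langle(z^*)^{-\beta}\psi,\psi\rangle\ge c_2\langle\psi,\psi\rangle$, and $\Re(z^*)^{-\beta}=\Re(z^\beta)^{-1}\ge\rho^\beta$ follows from the same lemma.

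With the hypotheses confirmed, Corollary \ref{cor:MR2nd} applies and produces a unique $u\in H_\rho^\beta(\mathbb R,H_0)\cap D\bigl(C^*N_{11}(\partial_0^{-1})^{-1}(C+N_{10}(\partial_0^{-1}))\bigr)$ solving \eqref{eq:2ndo}. The final step is pure bookkeeping: substituting $N_{00}=N_{01}=N_{10}=0$ and $N_{11}=1$ collapses \eqref{eq:2ndo} to $\partial_0M(\partial_0^{-1})u+C^*Cu=f$ and collapses the domain condition to $D(C^*C)$. Identifying $\partial_0M(\partial_0^{-1})=\partial_0\,\partial_0^{\beta-1}=\partial_0^\beta$ (equivalently, invoking Lemma \ref{lem:dmd}, which gives $D(\partial_0M(\partial_0^{-1}))=H_\rho^\beta(\mathbb R,H_0)=D(\partial_0^\beta)$), this is precisely \eqref{eq:frac}, and uniqueness is inherited from the corollary.

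The computations are all routine; the single substantive ingredient is the fractional positivity estimate $\Re\partial_0^\beta\ge\rho^\beta$, i.e.\ $\Re(z^{-1})^\beta\ge\rho^\beta$ on $B_{\mathbb C}(r,r)$, which I would simply cite from \cite[Lemma 2.1]{Picard2013_fractional} rather than reprove. The two points I would keep honest are the analyticity of $z\mapsto z^{1-\beta}$ near the boundary point $z=0$, and the uniformity of the positivity constants over $z\in B_{\mathbb C}(r,r)$, so that the hypotheses indeed hold for \emph{every} $\rho>0$ upon choosing a suitable $r>1/(2\rho)$.
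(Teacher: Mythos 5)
Your proposal is correct and follows essentially the same route as the paper: the identical choices $M(z)=z^{1-\beta}$ and $N_{11}(z)=1$ (all other $N_{ij}=0$), verification of conditions \eqref{enu:ass_m}, \eqref{enu:ass_m_n} and \eqref{enu:ass_n11} via the estimate $\Re\partial_{0}^{\beta}\geq\rho^{\beta}$ cited from \cite[Lemma 2.1]{Picard2013_fractional}, and then an application of Corollary \ref{cor:MR2nd} together with the identification $\partial_{0}M(\partial_{0}^{-1})=\partial_{0}^{\beta}$ via Lemma \ref{lem:dmd}. Your additional remarks on the analyticity of the principal branch $z\mapsto z^{1-\beta}$ on $B_{\mathbb{C}}(r,r)$ and on the uniformity of the positivity constants merely make explicit what the paper leaves implicit.
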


\subsection*{Acknowledgments}

M.~W.~carried out this work with financial support of the EPSRC
grant EP/L018802/2: ``Mathematical foundations of metamaterials:
homogenisation, dissipation and operator theory''. This is gratefully
acknowledged.

\end{document}